\theoremstyle{plain}
\newtheorem{lemma}{Lemma}[section]
\newtheorem{proposition}[lemma]{Proposition}
\newtheorem{corollary}[lemma]{Corollary}
\newtheorem{theorem}[lemma]{Theorem}
\theoremstyle{remark}
\newtheorem{openproblem}[lemma]{Open Problem}
\newtheorem{example}[lemma]{Example}
\numberwithin{equation}{section}
\newcommand{\cont}{\operatorname{cont}}
\newcommand{\Red}{\operatorname{Red}}
\newcommand{\wt}{\operatorname{wt}}
\newcommand{\et}{\tilde{e}}
\newcommand{\ft}{\tilde{f}}
\newcommand{\vp}{\varphi}
\newcommand{\ve}{\varepsilon}
\begin{document}

\title[Stanley through a crystal lens and from a random angle]{Richard Stanley through a crystal lens and from a random angle}

\author[A.~Schilling]{Anne Schilling}
\address[Anne Schilling]{Department of Mathematics, University of California, One Shields
Avenue, Davis, CA 95616-8633, U.S.A.}
\email{anne@math.ucdavis.edu}
\urladdr{http://www.math.ucdavis.edu/\~{}anne}
\thanks{Partially supported NSF grants DMS--1001256 and OCI--1147247.}

\dedicatory{Dedicated to Richard Stanley on the occasion of his seventieth birthday}

\keywords{Reduced words, Stanley symmetric functions, crystal bases, Markov chains, posets.}

\subjclass[2000]{Primary 05E05. Secondary 05E10, 05A05, 20G42, 60J10.}

\begin{abstract}
We review Stanley's seminal work on the number of reduced words of the longest element of the symmetric
group and his Stanley symmetric functions. We shed new light on this by giving a crystal theoretic interpretation
in terms of decreasing factorizations of permutations. Whereas crystal operators on tableaux are coplactic
operators, the crystal operators on decreasing factorization intertwine with the Edelman--Greene
insertion. We also view this from a random perspective and study a Markov chain on reduced words of the longest
element in a finite Coxeter group, in particular the symmetric group, and mention a generalization to a poset setting.
\end{abstract}

\maketitle

%%%%%%%%%%%%%%%%%%%%%%%%%%%%%%%%%%%%%%%%%%%%%%%%%%%%%%%%%%%%%%%%%
\section{Introduction}

In his seminal paper~\cite{Stanley.1984}, Richard Stanley proved his earlier conjecture that the number of reduced words
of the longest element  $w_0$ in the symmetric group $S_n$ is equal to the number of standard Young tableaux of
staircase shape with $\binom{n}{2}$ boxes. The proof uses symmetric functions, now known as Stanley symmetric functions,
but no representation theory. According to Robert Proctor's MathSciNet review ``The proof [...] is regarded by the author as being 
somewhat mysterious.'' A different proof was given by Edelman and Greene~\cite{EG:1987} by providing an analogue
of the Robinson--Schensted--Knuth (RSK) insertion for reduced words.

Stanley symmetric functions are generating functions of decreasing factorizations of permutations.
In joint work with Jennifer Morse~\cite{MS:2014}, we define a crystal structure on these decreasing factorizations.
By using the fact that crystals of type $A_{\ell-1}$ can also be represented in terms of Young tableaux, this naturally
yields a bijection between reduced words of $w_0 \in S_n$ and standard Young tableaux of staircase shape with $\binom{n}{2}$ boxes
via the crystal isomorphism by looking at the $(1^{\binom{n}{2}})$ weight space. It turns out that this crystal isomorphism intertwines
with the Edelman--Greene insertion and hence provides a representation theoretic interpretation. Another representation
theoretic interpretation of the Stanley symmetric functions as characters of generalized Young (Specht) modules was given
in~\cite{Kr:1995,ReS:1995,ReS:1998}. In the spirit of~\cite{Assaf:2008, Haiman:1992}, the crystal on decreasing factorizations
is Schur--Weyl dual to the $S_n$-representations associated with the Coxeter--Knuth graph~\cite[Chapter 2, Section 2.2]{LLMSSZ:2014}.

We then turn to a Markov chain on the reduced words of the longest element $w_0$ of a finite Coxeter system
$(W,S)$~\cite{BB:2005} that was discovered in joint work with Arvind Ayyer, Ben Steinberg and Nicolas Thi\'ery~\cite{ASST:2014}.
The state space for this chain consists of all reduced decompositions $\Red(w_0)$ for the longest element $w_0\in W$.
The transitions or exchange moves can be defined as follows. If the system is in state $\mathfrak w=i_1 i_2 \cdots i_k$, then one 
randomly chooses a generator $s_i\in S$, appends $i$ to the beginning of the word $\mathfrak w$, and removes some other letter $i_j$ in 
$\mathfrak w$ to obtain a reduced word of $w_0$. The letter $i_j$ is uniquely determined by the exchange condition for Coxeter groups~\cite{BB:2005}.
So the exchange move goes from $\mathfrak w$ to $i \;i_1 \cdots \widehat{i_j} \cdots i_k$, where $ \widehat{i_j}$
means omit $i_j$. When $W = S_n$ is the symmetric group and $S$ is the set of adjacent transpositions, then by Stanley's result~\cite{Stanley.1984}
the reduced decompositions of $w_0$ are equi-numerous with tableaux of staircase shape. Hence this chain
can be viewed as a stochastic process on such tableaux.

The paper is organized as follows. In Section~\ref{section.stanley} we review the definition of Stanley symmetric functions
and the main results of Stanley's paper~\cite{Stanley.1984}. In Section~\ref{section.crystal} we introduce the crystal of~\cite{MS:2014}
on decreasing factorizations of any element $w\in S_n$. This is used in Section~\ref{section.stanley and crystal} to give
new crystal theoretic interpretations of the expansion coefficients of Stanley symmetric functions into Schur functions
as well as the bijection between reduced words for $w_0$ and staircase tableaux. We end in Section~\ref{section.markov}
with a description of the exchange Markov chain.

%%%%%%%%%%%%%%%%%%%%%%%%%%%%%%%%%%%%%%%%%%%%%%%%%%%%%%%%%%%%%%%%%
\subsection*{Acknowledgements}
I would like to thank Jennifer Morse for her collaboration on~\cite{MS:2014}, where we define the crystal structure on
decreasing factorizations, and Arvind Ayyer, Ben Steinberg and Nicolas Thi\'ery for their collaboration on~\cite{ASST:2014}
which contains the exchange Markov chains on the long element of a finite Coxeter system. Also many thanks to Thomas Lam
for interesting comments, in particular on the relation to Coxeter--Knuth graphs and the Little map.
This work benefitted from  computations with {\sc Sage}~\cite{sage,sage-combinat}. Figures~\ref{figure.crystals},~\ref{figure.Markov n=2},
and~\ref{figure.Markov n=3} were produced with {\sc Sage}. Throughout the text we provide some {\sc Sage} examples to show
how to compute the various objects appearing in this paper.

%%%%%%%%%%%%%%%%%%%%%%%%%%%%%%%%%%%%%%%%%%%%%%%%%%%%%%%%%%%%%%%%%
\section{Stanley symmetric functions}
\label{section.stanley}

Stanley symmetric functions are indexed by permutations $w\in S_n$ of the symmetric group. Recall that $S_n$ is
generated by the simple transpositions $s_i$ for $1\le i<n$, where each $s_i$ interchanges $i$ and $i+1$. The word
$i_1 i_2 \cdots i_\ell$ of letters $i_j\in \{ 1,2,\ldots, n-1\}$ is called a \textit{reduced word} for $w$ if
$w = s_{i_1} s_{i_2} \cdots s_{i_\ell}$ and there is no shorter word with this property. The length $\ell(w)$ of $w$ is equal to $\ell$
if the word is reduced. We denote by $w_0$ the longest element in $S_n$. Finally, we write $w \gtrdot v$ for $v,w \in S_n$ if
$w$ covers $v$ in left weak order, that is, $w=s_iv$ for some $i\in \{1,2,\ldots,n-1\}$ and $\ell(w)=\ell(v)+1$.

An element $v\in S_n$ is called \textit{decreasing} if there is a reduced word $i_1 i_2 \cdots i_\ell$ for $v$ such that 
$i_1 > i_2 > \cdots > i_\ell$. The identity is considered to be decreasing. Note that a decreasing element $v$ is completely 
determined by its content $\cont(v)$, which is the set of all letters appearing in its reduced word(s). Given $w\in S_n$, a 
\textit{decreasing factorization} of $w$ is a factorization $w^k \cdots w^1$ such that $w=w^k \cdots w^1$ with 
$\ell(w) = \ell(w^1) + \cdots + \ell(w^k)$ and each factor $w^i$ is decreasing. We denote the set of all decreasing 
factorizations of $w$ by $\mathcal{W}_w$. Then for any $w\in S_n$, the \textit{Stanley symmetric function} $F_w$ is defined as
\begin{equation}
\label{equation.stanley}
 F_w(x) = \sum_{w^k \cdots w^1\in \mathcal{W}_w} x_1^{\ell(w^1)} \cdots x_k^{\ell(w^k)}.
\end{equation}

One of Stanley's motivations to study these functions was to understand the reduced words for a given $w$.
Let us denote the set of all reduced words for $w$ by $\Red(w)$.
For example, since every single letter $i$ is decreasing, the coefficient of the square free term $x_1 x_2 \cdots x_{\ell(w)}$
is precisely the number of reduced words $|\Red(w)|$.

\begin{example}
We show how to compute the Stanley symmetric functions using {\sc Sage}~\cite{sage,sage-combinat}.
Let $w_0=s_1s_2s_1=s_2s_1s_2$ be the long element in $S_3$, which can also be viewed as the Weyl group
of type $A_2$. Then we compute:
\begin{verbatim}
    sage: W = WeylGroup(['A',2],prefix='s')
    sage: w0 = W.long_element()
    sage: w0.stanley_symmetric_function()
    2*m[1, 1, 1] + m[2, 1]
\end{verbatim}
Here $m_\lambda$ are the monomial symmetric functions. Note that the square free term(s) are contained in $m_{1,1,1}$ which
indeed has coefficient 2, the number of reduced words of $w_0$.
\end{example}

It turns out that the Stanley symmetric functions are indeed symmetric functions. One of the most important bases of the ring of symmetric
functions $\Lambda$ are the Schur functions $s_\lambda(x)$ indexed by partitions $\lambda$. Schur functions are special
as they are related to the irreducible characters of the symmetric group. Also, under the Hall inner product
$\langle \cdot, \cdot \rangle \colon \Lambda \times \Lambda \to \mathbb{C}$ the Schur functions form an orthonormal
basis $\langle s_\lambda, s_\mu\rangle = \delta_{\lambda,\mu}$. Let us denote by $\lambda^t$ the transpose of the partition
$\lambda$, which is obtained from $\lambda$ by interchanging rows and columns. Then $\omega \colon \Lambda \to \Lambda$ is
the involution such that $\omega(s_\lambda) = s_{\lambda^t}$. For $f\in \Lambda$, denote by $f^\perp : \Lambda \to \Lambda$ the linear
operator that is adjoint to multiplication under $\langle \cdot, \cdot \rangle$.

\begin{theorem}\cite{Stanley.1984}
The Stanley symmetric functions $F_w$ for $w\in S_n$ satisfy the following properties:
\begin{enumerate}
\item $F_w(x)\in \Lambda$, that is, it is a symmetric function in $x=(x_1, x_2,  \ldots)$.
\item Let $a_{w,\lambda} \in \mathbb{Z}$ be the coefficient of the Schur function $s_\lambda$ in $F_w$.
Then there exist partitions $\lambda(w)$ and $\mu(w)$, so that $a_{w,\lambda(w)} = a_{w,\mu(w)} = 1$ and 
\[
	F_w(x) = \sum_{\lambda(w)\le \lambda \le \mu(w)} a_{w,\lambda} s_\lambda(x).
\]
\item We have $\omega(F_w) = F_{w_0w}$.
\item We have $s_1^\perp F_w = \sum_{w \gtrdot v} F_v$.
\end{enumerate}
\end{theorem}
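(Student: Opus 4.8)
My plan is to derive (4) and (3) formally from the Hopf structure of $\Lambda$ together with a diagram symmetry, to obtain (1) from an $\mathfrak{sl}_2$-type involution, and to reserve the real work for (2). For (1) I would show, for every $i$, that there is a weight-preserving involution of $\mathcal W_w$ interchanging $\ell(w^i)$ and $\ell(w^{i+1})$ while fixing $w$ and every other factor; since adjacent transpositions generate the symmetric group, these involutions force $F_w\in\Lambda$. The move is local, touching only the adjacent factors $w^{i+1}w^i$, so it reduces to one lemma: given decreasing $u,v$ with $\ell(uv)=\ell(u)+\ell(v)$, refactor $uv=u'v'$ with $u',v'$ decreasing and $(\ell(u'),\ell(v'))=(\ell(v),\ell(u))$, involutively. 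This is precisely the redistribution effected by the crystal operators $\et_i,\ft_i$ of Section~\ref{section.crystal}, so (1) also drops out of that crystal once it is in place.

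For (4) I would use the coproduct on decreasing factorizations. Separating the ordered alphabet of weights into a low block and a high block splits each factorization at the boundary, giving at once
\[
  \Delta F_w=\sum_{w=uv}F_u\otimes F_v,
\]
the sum over length-additive factorizations $w=uv$. Because $\Lambda$ is self-dual under $\langle\cdot,\cdot\rangle$ with multiplication adjoint to $\Delta$, for any $f\in\Lambda$ one has $f^\perp F_w=\sum_{w=uv}\langle f,F_u\rangle\,F_v$. Now $F_u$ is homogeneous of degree $\ell(u)$, so $\langle s_1,F_u\rangle$ vanishes unless $\ell(u)=1$, in which case $u=s_i$ and $F_{s_i}=s_1$ gives $\langle s_1,F_u\rangle=1$. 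The surviving terms are exactly $w=s_iv$ with $\ell(v)=\ell(w)-1$, that is the covers $w\gtrdot v$, whence
\[
  s_1^\perp F_w=\sum_{w\gtrdot v}F_v.
\]

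For (3) I would realize $\omega$ through the diagram automorphism $s_i\mapsto s_{n-i}$, i.e.\ the length-preserving conjugation $w\mapsto w_0ww_0$. Expanding $F_w$ into Gessel's fundamental quasisymmetric functions, $F_w=\sum_{\mathfrak w\in\Red(w)}L_{D(\mathfrak w)}$ for the descent set $D(\mathfrak w)$ of the reduced word (the standard repackaging of decreasing factorizations as $P$-partitions), relabeling every letter by $i\mapsto n-i$ is a bijection $\Red(w)\to\Red(w_0ww_0)$ that turns ascents into descents, so $D(\mathfrak w)$ is sent to its complement. Since $\omega L_\alpha=L_{\alpha^c}$, applying $\omega$ to each term reproduces exactly the expansion of $F_{w_0ww_0}$, giving $\omega(F_w)=F_{w_0ww_0}$. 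The only bookkeeping is to pin down the descent convention so that the complementation under $i\mapsto n-i$ matches $\omega L_\alpha=L_{\alpha^c}$; part (1) guarantees symmetry throughout.

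Part (2) is where the genuine difficulty lies. First I would establish Schur-positivity, $a_{w,\lambda}\ge 0$: this is not formal, and I would read it off the crystal of Section~\ref{section.crystal}, whose connected components on $\mathcal W_w$ are each isomorphic to a highest-weight crystal $B(\lambda)$, so that $a_{w,\lambda}$ counts highest-weight factorizations of weight $\lambda$ (equivalently, via Edelman--Greene insertion). The dominant endpoint $\mu(w)$ is the dominance-maximal shape that occurs, and its coefficient is $1$ because the dominance-leading monomial of $F_w$ comes from a single factorization; applying (3) then identifies the other endpoint as $\lambda(w)=\mu(w_0ww_0)^t$, again with coefficient $1$, since transposition reverses dominance order. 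The hard part will be the interval claim itself, that the support is exactly $[\lambda(w),\mu(w)]$. Containment I would try to extract from the two multiplicity-one computations together with the $\omega$-symmetry, but \emph{saturation}---producing a highest-weight factorization of every intermediate shape---seems to require an explicit combinatorial construction, and I expect this to be the main obstacle of the whole theorem.
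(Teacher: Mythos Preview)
The paper does not prove this theorem at all: it is stated with the citation \cite{Stanley.1984} and then the text moves on to the separate positivity result of Edelman--Greene and Lascoux--Sch\"utzenberger. There is therefore no ``paper's own proof'' to compare your sketch against; what follows is an assessment of your proposal on its own terms.

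Your outlines for (1) and (4) are standard and essentially correct; the local two-factor involution for (1) is exactly what the crystal operators of Section~\ref{section.crystal} realize, and the coproduct argument for (4) is the right mechanism, modulo keeping straight which tensor factor carries the low variables so that the surviving length-one piece sits on the \emph{left} of $w=s_iv$.

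Two genuine issues, though. First, for (3) your diagram-automorphism argument produces $\omega(F_w)=F_{w_0 w w_0}$, since $s_i\mapsto s_{n-i}$ is conjugation by $w_0$; the statement you are asked to prove is $\omega(F_w)=F_{w_0 w}$. These are not tautologically the same, and you have not bridged the gap. You would need an additional identity (for instance $F_w=F_{w^{-1}}$, obtained by reversing reduced words and tracking what that does to descent compositions, together with the relation between $w_0ww_0$, $w^{-1}w_0$, and $w_0w$) to land on the form asserted here. As written, your argument proves a different formula.

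Second, you have over-read part (2). The theorem does \emph{not} assert saturation of the dominance interval, and it does \emph{not} assert Schur-positivity; the coefficients are only claimed to lie in $\mathbb{Z}$, and positivity is the content of the \emph{next} theorem in the paper. What must actually be shown is only that there is a unique dominance-maximal $\mu(w)$ and a unique dominance-minimal $\lambda(w)$ in the Schur support, each with coefficient $1$, and that every other shape in the support lies between them. Your leading-monomial argument for $\mu(w)$ and the $\omega$-duality for $\lambda(w)$ are exactly the right ideas for this, and once (3) is fixed they finish the job; the ``main obstacle'' you anticipate does not exist, and invoking the crystal or Edelman--Greene for positivity is unnecessary (and anachronistic relative to Stanley's original statement).
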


Edelman and Greene~\cite{EG:1987} and separately Lascoux and Sch\"utzenberger~\cite{LS:1985} showed that the 
Schur expansion coefficients $a_{w,\lambda}$ are nonnegative.
 
\begin{theorem} \cite{EG:1987,LS:1985}
\label{theorem.schur positive}
We have $a_{w,\lambda} \in \mathbb{Z}_{\ge 0}$. 
\end{theorem}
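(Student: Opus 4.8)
The plan is to prove positivity by realizing each coefficient $a_{w,\lambda}$ as the cardinality of an explicit finite set, using the Edelman--Greene (Coxeter--Knuth) insertion to reorganize the decreasing factorizations that index $F_w$. First I would set up the insertion. Given a decreasing factorization $w^k\cdots w^1\in\mathcal W_w$, I concatenate the reduced words of its factors to obtain a reduced word $a_1a_2\cdots a_\ell$ for $w$, and attach to each letter the colour $i$ of the factor $w^i$ from which it came. I then insert $a_1,\dots,a_\ell$ one at a time into a tableau $P$ by the modified row-bumping rule: to insert $x$ into a row, append it if it exceeds every entry; otherwise let $y$ be the smallest entry with $y>x$, and if the row already contains both $x$ and $x+1$ leave the row unchanged and bump $x+1$ into the next row, while in every other case replace $y$ by $x$ and bump $y$. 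Each insertion creates exactly one new cell, which I record in a recording tableau $Q$ with the colour of the inserted letter.

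Next I would establish the three structural properties that make this bookkeeping meaningful. (i) The insertion tableau $P$ is always \emph{increasing}, i.e.\ strictly increasing along rows and down columns; the special clause of the bumping rule is exactly what prevents a repeated entry from being created in a column. (ii) After every insertion the reading word of $P$ remains a reduced word for $w$: a single insertion alters the reading word only through Coxeter--Knuth moves (the braid relation $a(a{+}1)a\sim(a{+}1)a(a{+}1)$ and the commutations $acb\sim cab$, $bac\sim bca$ for $a<b<c$), and these preserve the underlying element of $S_n$. This is the exact analogue of the invariance of RSK under Knuth relations. (iii) Because each factor is a strictly decreasing word, the cells of $Q$ carrying a fixed colour form a vertical strip, so for a fixed insertion tableau $P$ the admissible recording tableaux are precisely the column-strict fillings of the transposed shape, whose weight generating function is the Schur function $s_{\mathrm{sh}(P)^t}$.

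The heart of the argument---and the step I expect to be the main obstacle---is to verify that the special bumping clause simultaneously preserves properties~(i) and~(ii) and that the resulting map is a \emph{bijection}
\[
\mathcal W_w\;\longleftrightarrow\;\bigsqcup_{P\in\mathrm{EG}(w)}\bigl\{(P,Q): Q\text{ a recording tableau of shape }\mathrm{sh}(P)\bigr\},
\]
where $\mathrm{EG}(w)$ denotes the finite set of increasing tableaux whose reading word lies in $\Red(w)$. Surjectivity and injectivity are proved by constructing an explicit reverse (un-bumping) procedure and checking that at each step it returns a reduced word and correctly inverts the special clause. Granting the bijection, weight-preservation is automatic, so summing first over $Q$ and then over $P$ gives
\[
F_w(x)\;=\;\sum_{P\in\mathrm{EG}(w)}\ \sum_{Q}x^{\wt(Q)}\;=\;\sum_{P\in\mathrm{EG}(w)}s_{\mathrm{sh}(P)^t}(x).
\]
Comparing with $F_w=\sum_\lambda a_{w,\lambda}s_\lambda$ and using the linear independence of the Schur functions identifies $a_{w,\lambda}=\#\{P\in\mathrm{EG}(w):\mathrm{sh}(P)=\lambda^t\}\in\mathbb Z_{\ge0}$, which is the claim. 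As an alternative route---the one pursued elsewhere in this paper---one can instead place a type $A_{n-1}$ crystal structure on $\mathcal W_w$ intertwining with this insertion; its connected components are normal crystals, each contributing a single Schur function, so that $a_{w,\lambda}$ again counts highest-weight elements of a fixed weight and is manifestly nonnegative.
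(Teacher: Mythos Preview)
Your proposal is correct. Note, however, that the paper does not prove this theorem at the point where it is stated: it is simply cited from \cite{EG:1987,LS:1985}. The paper's \emph{own} argument for Schur positivity appears later, as Corollary~\ref{corollary.a}, and goes through the crystal structure on $\mathcal W_w^\ell$: once $B(w)$ is known to be a genuine $U_q(A_{\ell-1})$-crystal (Theorem~\ref{theorem.crystal}), general crystal theory decomposes it into irreducibles $B(\lambda)$, each contributing a Schur function, so $a_{w,\lambda}$ counts highest weight elements of weight $\lambda$.

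Your main argument instead reconstructs the Edelman--Greene insertion proof---essentially the original approach of the cited reference \cite{EG:1987}---and is sound in outline (the three structural properties you isolate are exactly the ones that need checking, and your identification $a_{w,\lambda}=\#\{P\in\mathrm{EG}(w):\mathrm{sh}(P)=\lambda^t\}$ matches the paper's quoted characterization just before Corollary~\ref{corollary.a bij}). You then mention the crystal route as an alternative, which is in fact the route the paper takes. So the two approaches are swapped in emphasis: you lead with EG insertion and note crystals; the paper leads with crystals and then shows (Theorem~\ref{theorem.intertwine}) that the crystal isomorphism is realized by the EG recording tableau, tying the two together. The crystal proof buys a representation-theoretic explanation and uniform machinery; the direct EG proof buys an explicit bijection without needing to verify Stembridge's local axioms.
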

 
\begin{example}
We demonstrate Theorem~\ref{theorem.schur positive} in {\sc Sage}:
\begin{verbatim}
    sage: W = WeylGroup(['A',2],prefix='s')
    sage: w0 = W.long_element()
    sage: Sym = SymmetricFunctions(ZZ)
    sage: s = Sym.schur()
    sage: s(w0.stanley_symmetric_function())
    s[2, 1]
\end{verbatim}
\end{example} 
 
In the next section we will see that $a_{w,\lambda}$ can be interpreted as the number of highest weight elements of
weight $\lambda$ in a crystal graph.

%%%%%%%%%%%%%%%%%%%%%%%%%%%%%%%%%%%%%%%%%%%%%%%%%%%%%%%%%%%%%%%%%
\section{Crystal on decreasing factorizations}
\label{section.crystal}

This section highlights some recent joint results with Jennifer Morse~\cite{MS:2014}.
Let $\mathcal{W}_w^\ell$ be the set of all decreasing factorizations of $w\in S_n$ with $\ell$ factors (some of
which might be trivial). 
We define a crystal structure $B(w)$ of type $A_{\ell-1}$ on $\mathcal{W}_w^\ell$.

We begin by introducing an abstract crystal~\cite{Kash:1991}. Let $\mathfrak{g}$ be an (affine Kac--Moody) Lie algebra with
weight lattice $P$ and Dynkin diagram index set $I$. Denote the simple roots and simple coroots by $\alpha_i$
and $\alpha_i^\vee$ $(i \in I)$, respectively.
Then an \textit{abstract $U_q(\mathfrak{g})$-crystal} is a nonempty set $B$ together with maps
\begin{equation*}
\begin{split}
	\wt &\colon B \to P\\
	\et_i, \ft_i &\colon B \to B \cup \{\bf{0}\} \qquad \text{for all $i\in I$}
\end{split}
\end{equation*}
satisfying
\begin{enumerate}
\item $\ft_i(b)=b'$ is equivalent to $\et_i(b')=b$ for $b,b'\in B$, $i\in I$.
\item For $i\in I$ and $b\in B$
\begin{equation*}
\begin{split}
	&\wt(\et_i b) = \wt(b) +\alpha_i \qquad \text{if $\et_ib \in B$},\\
	&\wt(\ft_i b) = \wt(b) -\alpha_i \qquad \text{if $\ft_ib \in B$}.
\end{split}
\end{equation*}
\item For all $i\in I$ and $b\in B$, we have
$
	\vp_i(b) = \ve_i(b) + \langle \alpha_i^\vee, \wt(b)\rangle,
$
where
\begin{equation*}
\begin{split}
	\ve_i(b) &= \max\{d \ge 0 \mid \et_i^d(b) \neq \bf{0}\},\\
	\vp_i(b) &= \max\{d \ge 0 \mid \ft_i^d(b) \neq \bf{0}\}.
\end{split}
\end{equation*}
\end{enumerate}
An abstract crystal can be depicted by a graph, called the \textit{crystal graph}, with vertices $b\in B$ and an edge
$b \stackrel{i}{\to} b'$ if $\ft_i(b)=b'$.

The elements in $B(w)$ are the decreasing factorizations of $w$ into at most $\ell$ factors $\mathcal{W}_w^\ell$.
The weight function $\wt$ of $w^\ell \cdots w^1\in B(w)$ is defined to be $(\ell(w^1),\ell(w^2),\ldots,\ell(w^\ell))$.
The Kashiwara raising and lowering operators $\et_i$ and $\ft_i$ only act on the factors $w^{i+1} w^i$. The action is
defined by first bracketing certain letters and then moving an unbracketed letter from one factor to the other.
Let us begin by describing the bracketing procedure. Start with the largest letter $b$ in $\cont(w^{i+1})$
and pair it with the smallest $a>b$ in $\cont(w^i)$. If there is no such $a$ in $\cont(w^i)$, then
$b$ is unpaired.  The pairing proceeds in decreasing order on elements of $\cont(w^{i+1})$, and with each iteration 
previously paired letters of $\cont(w^i)$ are ignored. Define
$$
L_i(w^\ell \cdots w^1)=
\{
b\in\cont(w^{i+1}) \mid
b  \text{ is unpaired in the $w^{i+1}w^i$-pairing}
\}
$$
and
$$
R_i(w^\ell \cdots w^1)=
\{
b\in\cont(w^{i}) \mid
b  \text{ is unpaired in the $w^{i+1}w^i$-pairing}
\}\;.
$$

Then $\et_i(w^\ell \cdots w^1)$ is defined by replacing the factors $w^{i+1} w^i$ by $\widetilde w^{i+1} \widetilde w^i$ such that
$$
\cont(\widetilde w^{i+1})=\cont(w^{i+1})\backslash\{b\}\quad\text{and} 
\quad \cont(\widetilde w^i)=\cont(w^i)\cup\{b-t\}
$$
for $b=\min(L_i(w^\ell \cdots w^1))$ and
$t=\min\{j\geq 0\mid b-j-1\not\in\cont(w^{i+1})\}$.
If $L_i(w^\ell \cdots w^1)=\emptyset$, $\et_i(w^\ell \cdots w^1)=\bf{0}$.

Similarly, $\ft_i(w^\ell \cdots w^1)$ is defined by replacing the factors $w^{i+1} w^i$ by $\widetilde w^{i+1} \widetilde w^i$ such that
$$
\cont(\widetilde w^{i+1})=\cont(w^{i+1})\cup\{a+s\}\quad\text{and}\quad
\cont(\widetilde w^i)=\cont(w^i)\backslash\{a\}
$$ 
for $a=\max(R_i(w^\ell \cdots w^1))$ and
$s=\min\{j\geq 0\mid a+j+1\not\in\cont(w^i)\}$.
If $R_i(w^\ell \cdots w^1)=\emptyset$, $\ft_i(w^\ell \cdots w^1)=\bf{0}$.

\begin{example}
Let $(s_3 s_2)(s_3 s_1)(s_2) \in \mathcal{W}_w^3$ for $w= s_3 s_2s_3 s_1s_2 \in S_4$.
To apply $\et_2$ we need to first bracket the letters in $\cont(w^3) = 32$ with those in
$\cont(w^2) = 31$. The letter 3 in $\cont(w^3)$ is unbracketed since there is no bigger letter in
$\cont(w^2)$, but the letter 2 in $\cont(w^3)$ is bracketed with 3 in $\cont(w^2)$. Hence 
$b = \min(L_2(w^3 w^2 w^1))=3$ and $t=\min\{j\geq 0\mid b-j-1\not\in\cont(w^2)\}=1$.
Therefore, $\et_2((s_3 s_2)(s_3 s_1)(s_2)) = (s_2)(s_3 s_2 s_1)(s_2)$.
Similarly, $\ft_2((s_3 s_2)(s_3 s_1)(s_2)) = (s_3 s_2 s_1) (s_3)(s_2)$.
\end{example}

\begin{theorem} \cite{MS:2014}
\label{theorem.crystal}
$B(w)$ is a $U_q(A_{\ell-1})$-crystal coming from a $U_q(A_{\ell-1})$-module.
\end{theorem}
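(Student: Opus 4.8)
The plan is to establish two things: that $B(w)$ satisfies the abstract crystal axioms recalled above, and that the resulting crystal is genuinely the crystal of a $U_q(A_{\ell-1})$-module. Since $A_{\ell-1}$ is simply-laced, the cleanest route to the stronger (module) claim is to verify the local axioms of Stembridge's characterization of simply-laced crystals: these guarantee that an abstract crystal satisfying them is a disjoint union of highest weight crystals $B(\lambda)$, and hence arises from a module.

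First I would check the abstract axioms directly from the combinatorial definition. Axiom~(2) is immediate: $\et_i$ moves a single letter out of $w^{i+1}$ and into $w^i$ (and $\ft_i$ does the reverse), so it increases $\ell(w^i)$ by one, decreases $\ell(w^{i+1})$ by one, and changes the weight by $+\alpha_i$ (respectively $-\alpha_i$). The real content here is Axiom~(1), that $\et_i$ and $\ft_i$ are mutually inverse when nonzero. This splits into (a) showing the new factors $\widetilde w^{i+1}, \widetilde w^i$ are again decreasing and satisfy $\widetilde w^{i+1}\widetilde w^i = w^{i+1}w^i$ with lengths adding, so the output is a legitimate decreasing factorization of the same element; and (b) showing the pairing is stable under the move, so that applying the opposite operator restores the original. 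The shifts $t$ and $s$ are engineered precisely so that the moved letter lands in the unique slot keeping both factors decreasing, and I would verify reversibility by checking that the letter $b-t$ created by $\et_i$ is exactly the maximal unpaired letter of the lower factor that $\ft_i$ subsequently selects, and that its $s$-shift returns it to $b$. Axiom~(3) then reduces to the identity $\vp_i(b)-\ve_i(b)=\ell(w^i)-\ell(w^{i+1})$, which holds because the bracketing pairs letters of $\cont(w^i)$ bijectively with letters of $\cont(w^{i+1})$; identifying $\ve_i=|L_i|$ and $\vp_i=|R_i|$ completes the check.

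For the module statement I would verify Stembridge's local axioms on the crystal graph. Operators $\et_i$ and $\et_j$ with $|i-j|\ge 2$ act on disjoint pairs of adjacent factors and so commute, disposing of those axioms at once. The substance lies in the case $|i-j|=1$, where $\et_i$ (acting on factors $i,i+1$) and $\et_{i+1}$ (acting on factors $i+1,i+2$) share the middle factor $w^{i+1}$. Here I must show that the bracketing of three consecutive factors propagates correctly, i.e.\ that inserting or removing a letter in $w^{i+1}$ via one operator alters the $L$- and $R$-sets seen by its neighbour in exactly the way Stembridge's conditions on the changes of $\ve_j,\vp_j$ along an edge and the associated commutation (hexagon/octahedron) relations require. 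I expect this interaction, together with tracking how the value-shifts $t,s$ modify the contents presented to the adjacent pairing, to be the main obstacle, since this is where the decreasing-factor bookkeeping is most delicate.

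An alternative and more conceptual route is to exhibit an explicit weight-preserving bijection from $B(w)$ onto a crystal already known to come from a module, intertwining the operators. Because the pairing rule defining $\et_i,\ft_i$ is exactly the signature rule governing crystal tensor products, one expects $B(w)$ to embed as a subcrystal of a tensor product of standard type-$A_{\ell-1}$ crystals; equivalently, the statement can be transported across the Edelman--Greene insertion, under which these operators become the classical coplactic operators on semistandard tableaux. Either isomorphism would make Theorem~\ref{theorem.crystal} immediate, at the cost of proving the intertwining, which is again the crux.
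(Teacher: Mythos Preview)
Your proposal is correct and aligns with the paper's own approach: the paper states that Theorem~\ref{theorem.crystal} was proved in \cite{MS:2014} precisely by verifying the Stembridge local axioms~\cite{St:2003}, and your outline of that verification---trivial commutation for $|i-j|\ge 2$, with the real work in the $|i-j|=1$ case on three consecutive factors---matches that strategy. Your alternative route via the Edelman--Greene intertwining is also valid (it is essentially Theorem~\ref{theorem.intertwine} in the paper), but in \cite{MS:2014} the logic runs the other way: the Stembridge axioms establish the crystal structure first, and the intertwining with EG insertion is derived afterward.
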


In~\cite{MS:2014}, Theorem~\ref{theorem.crystal} was proved by showing that the Stembridge local axioms are satisfied~\cite{St:2003}.

\begin{figure}
\begin{center}
\begin{tikzpicture}[>=latex,line join=bevel,]
\node (node_7) at (102.000000bp,231.000000bp) [draw,draw=none] {$\left(s_{1}, 1, s_{2}s_{1}\right)$};
  \node (node_6) at (102.000000bp,83.000000bp) [draw,draw=none] {$\left(s_{1}, s_{2}s_{1}, 1\right)$};
  \node (node_5) at (28.000000bp,83.000000bp) [draw,draw=none] {$\left(s_{2}s_{1}, 1, s_{2}\right)$};
  \node (node_4) at (68.000000bp,9.000000bp) [draw,draw=none] {$\left(s_{2}s_{1}, s_{2}, 1\right)$};
  \node (node_3) at (28.000000bp,231.000000bp) [draw,draw=none] {$\left(1, s_{2}s_{1}, s_{2}\right)$};
  \node (node_2) at (102.000000bp,157.000000bp) [draw,draw=none] {$\left(s_{1}, s_{2}, s_{1}\right)$};
  \node (node_1) at (28.000000bp,157.000000bp) [draw,draw=none] {$\left(s_{2}, s_{1}, s_{2}\right)$};
  \node (node_0) at (64.000000bp,305.000000bp) [draw,draw=none] {$\left(1, s_{1}, s_{2}s_{1}\right)$};
  \draw [blue,->] (node_0) ..controls (54.100000bp,284.650000bp) and (44.059000bp,264.010000bp)  .. (node_3);
  \definecolor{strokecol}{rgb}{0.0,0.0,0.0};
  \pgfsetstrokecolor{strokecol}
  \draw (60.000000bp,268.000000bp) node {$1$};
  \draw [blue,->] (node_7) ..controls (102.000000bp,210.870000bp) and (102.000000bp,190.800000bp)  .. (node_2);
  \draw (111.000000bp,194.000000bp) node {$1$};
  \draw [red,->] (node_1) ..controls (28.000000bp,136.870000bp) and (28.000000bp,116.800000bp)  .. (node_5);
  \draw (37.000000bp,120.000000bp) node {$2$};
  \draw [blue,->] (node_5) ..controls (39.061000bp,62.538000bp) and (50.371000bp,41.613000bp)  .. (node_4);
  \draw (61.000000bp,46.000000bp) node {$1$};
  \draw [red,->] (node_3) ..controls (28.000000bp,210.870000bp) and (28.000000bp,190.800000bp)  .. (node_1);
  \draw (37.000000bp,194.000000bp) node {$2$};
  \draw [red,->] (node_6) ..controls (92.650000bp,62.649000bp) and (83.167000bp,42.011000bp)  .. (node_4);
  \draw (98.000000bp,46.000000bp) node {$2$};
  \draw [red,->] (node_0) ..controls (74.450000bp,284.650000bp) and (85.049000bp,264.010000bp)  .. (node_7);
  \draw (96.000000bp,268.000000bp) node {$2$};
  \draw [blue,->] (node_2) ..controls (102.000000bp,136.870000bp) and (102.000000bp,116.800000bp)  .. (node_6);
  \draw (111.000000bp,120.000000bp) node {$1$};
\end{tikzpicture}
\hspace{2cm}
\scalebox{0.8}{
\begin{tikzpicture}[>=latex,line join=bevel,]
\node (node_7) at (35.000000bp,17.000000bp) [draw,draw=none] {${\def\lr#1{\multicolumn{1}{|@{\hspace{.6ex}}c@{\hspace{.6ex}}|}{\raisebox{-.3ex}{$#1$}}}\raisebox{-.6ex}{$\begin{array}[t]{*{2}c}\cline{1-1}\lr{3}\\\cline{1-2}\lr{2}&\lr{3}\\\cline{1-2}\end{array}$}}$};
  \node (node_6) at (57.000000bp,107.000000bp) [draw,draw=none] {${\def\lr#1{\multicolumn{1}{|@{\hspace{.6ex}}c@{\hspace{.6ex}}|}{\raisebox{-.3ex}{$#1$}}}\raisebox{-.6ex}{$\begin{array}[t]{*{2}c}\cline{1-1}\lr{3}\\\cline{1-2}\lr{2}&\lr{2}\\\cline{1-2}\end{array}$}}$};
  \node (node_5) at (13.000000bp,107.000000bp) [draw,draw=none] {${\def\lr#1{\multicolumn{1}{|@{\hspace{.6ex}}c@{\hspace{.6ex}}|}{\raisebox{-.3ex}{$#1$}}}\raisebox{-.6ex}{$\begin{array}[t]{*{2}c}\cline{1-1}\lr{3}\\\cline{1-2}\lr{1}&\lr{3}\\\cline{1-2}\end{array}$}}$};
  \node (node_4) at (57.000000bp,197.000000bp) [draw,draw=none] {${\def\lr#1{\multicolumn{1}{|@{\hspace{.6ex}}c@{\hspace{.6ex}}|}{\raisebox{-.3ex}{$#1$}}}\raisebox{-.6ex}{$\begin{array}[t]{*{2}c}\cline{1-1}\lr{3}\\\cline{1-2}\lr{1}&\lr{2}\\\cline{1-2}\end{array}$}}$};
  \node (node_3) at (57.000000bp,287.000000bp) [draw,draw=none] {${\def\lr#1{\multicolumn{1}{|@{\hspace{.6ex}}c@{\hspace{.6ex}}|}{\raisebox{-.3ex}{$#1$}}}\raisebox{-.6ex}{$\begin{array}[t]{*{2}c}\cline{1-1}\lr{3}\\\cline{1-2}\lr{1}&\lr{1}\\\cline{1-2}\end{array}$}}$};
  \node (node_2) at (13.000000bp,197.000000bp) [draw,draw=none] {${\def\lr#1{\multicolumn{1}{|@{\hspace{.6ex}}c@{\hspace{.6ex}}|}{\raisebox{-.3ex}{$#1$}}}\raisebox{-.6ex}{$\begin{array}[t]{*{2}c}\cline{1-1}\lr{2}\\\cline{1-2}\lr{1}&\lr{3}\\\cline{1-2}\end{array}$}}$};
  \node (node_1) at (13.000000bp,287.000000bp) [draw,draw=none] {${\def\lr#1{\multicolumn{1}{|@{\hspace{.6ex}}c@{\hspace{.6ex}}|}{\raisebox{-.3ex}{$#1$}}}\raisebox{-.6ex}{$\begin{array}[t]{*{2}c}\cline{1-1}\lr{2}\\\cline{1-2}\lr{1}&\lr{2}\\\cline{1-2}\end{array}$}}$};
  \node (node_0) at (35.000000bp,377.000000bp) [draw,draw=none] {${\def\lr#1{\multicolumn{1}{|@{\hspace{.6ex}}c@{\hspace{.6ex}}|}{\raisebox{-.3ex}{$#1$}}}\raisebox{-.6ex}{$\begin{array}[t]{*{2}c}\cline{1-1}\lr{2}\\\cline{1-2}\lr{1}&\lr{1}\\\cline{1-2}\end{array}$}}$};
  \draw [red,->] (node_0) ..controls (42.240000bp,347.380000bp) and (46.785000bp,328.790000bp)  .. (node_3);
  \definecolor{strokecol}{rgb}{0.0,0.0,0.0};
  \pgfsetstrokecolor{strokecol}
  \draw (56.000000bp,332.000000bp) node {$2$};
  \draw [blue,->] (node_5) ..controls (16.301000bp,79.448000bp) and (18.631000bp,64.720000bp)  .. (22.000000bp,52.000000bp) .. controls (22.762000bp,49.122000bp) and (23.670000bp,46.149000bp)  .. (node_7);
  \draw (31.000000bp,62.000000bp) node {$1$};
  \draw [red,->] (node_6) ..controls (49.760000bp,77.381000bp) and (45.215000bp,58.787000bp)  .. (node_7);
  \draw (56.000000bp,62.000000bp) node {$2$};
  \draw [blue,->] (node_4) ..controls (57.000000bp,167.510000bp) and (57.000000bp,149.140000bp)  .. (node_6);
  \draw (66.000000bp,152.000000bp) node {$1$};
  \draw [red,->] (node_1) ..controls (13.000000bp,257.510000bp) and (13.000000bp,239.140000bp)  .. (node_2);
  \draw (22.000000bp,242.000000bp) node {$2$};
  \draw [blue,->] (node_3) ..controls (57.000000bp,257.510000bp) and (57.000000bp,239.140000bp)  .. (node_4);
  \draw (66.000000bp,242.000000bp) node {$1$};
  \draw [blue,->] (node_0) ..controls (20.803000bp,354.630000bp) and (17.771000bp,348.250000bp)  .. (16.000000bp,342.000000bp) .. controls (13.453000bp,333.010000bp) and (12.414000bp,322.860000bp)  .. (node_1);
  \draw (25.000000bp,332.000000bp) node {$1$};
  \draw [red,->] (node_2) ..controls (13.000000bp,167.510000bp) and (13.000000bp,149.140000bp)  .. (node_5);
  \draw (22.000000bp,152.000000bp) node {$2$};
\end{tikzpicture}
}
\end{center}
\caption{
Crystal of type $A_2$ for $w_0=s_1 s_2 s_1\in S_3$ on the left and the highest weight crystal $B(2,1)$ of type $A_2$ in terms of Young tableaux on the right.
\label{figure.crystals}}
\end{figure}

\begin{example}
An example of the crystal $B(w_0)$ of type $A_2$ for $w_0\in S_3$ is provided in Figure~\ref{figure.crystals}. It can be
produced in {\sc Sage} as follows (note that type $A_2$ means 3 factors):
\begin{verbatim}
    sage: W = WeylGroup(['A',2],prefix='s')
    sage: w0 = W.long_element()
    sage: B = crystals.AffineFactorization(w0,3)
    sage: view(B)
\end{verbatim}
Here are some simple operations one can perform on this crystal:
\begin{verbatim}
    sage: B.list()
    [(1, s1, s2*s1), (1, s2*s1, s2), (s2, s1, s2),
     (s2*s1, 1, s2), (s2*s1, s2, 1), (s1, 1, s2*s1),
     (s1, s2, s1), (s1, s2*s1, 1)]
    sage: b = B.module_generators[0]; b
    (1, s1, s2*s1)
    sage: b.f(1)
    (1, s2*s1, s2)
\end{verbatim} 
\end{example}

An element $u\in B$ is called \textit{highest weight} if $\et_iu=\bf{0}$ for all $i\in I$.
A crystal $B$ is in the category of highest weight integrable crystals if for every $b\in B$, there exists a sequence
$i_1,\ldots,i_h\in I$ such that $\et_{i_1} \cdots \et_{i_h}b$ is highest weight. 
One of the most important applications of crystal theory is that 
crystals are well-behaved with respect to taking tensor products and that
connected components in a crystal graph correspond to irreducible components.

\begin{theorem} \cite{Kash:1991,N:1993,Littelmann:1994}
\label{theorem.highestwts}
Let $B$ be a $U_q(\mathfrak{g})$-crystal in the category of integrable highest-weight crystals.
Then the connected components of $B$ correspond to the irreducible components.
In addition, the irreducible components are in bijection with the highest weight vectors.
\end{theorem}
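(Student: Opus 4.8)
The plan is to decompose $B$ along its crystal graph and to identify each connected component with an irreducible highest weight crystal $B(\lambda)$, where $\lambda$ is the weight of its (unique) highest weight vector. Since by definition each $\et_i$ and $\ft_i$ sends a vertex to an adjacent vertex of the crystal graph (or to $\mathbf{0}$), these operators preserve connected components; hence $B = \bigsqcup_j C_j$ decomposes as a disjoint union of subcrystals $C_j$, each closed under all $\et_i, \ft_i$. It therefore suffices to treat a single component, and the statement reduces to two claims: (i) every connected component $C_j$ is isomorphic, as a crystal, to some $B(\lambda)$ (so that connected components are exactly the irreducible components, matching the irreducible summands of the underlying module), and (ii) each $C_j$ contains exactly one highest weight vector.

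First I would establish the existence of a highest weight vector in each $C_j$. By the hypothesis that $B$ lies in the category of integrable highest weight crystals, any chosen $b \in C_j$ admits a sequence $i_1, \ldots, i_h$ with $\et_{i_1} \cdots \et_{i_h} b$ highest weight; this element lies in $C_j$ because the $\et_i$ preserve components. Writing $u_j$ for such a vertex and $\lambda_j = \wt(u_j)$, the condition $\et_i u_j = \mathbf{0}$ gives $\ve_i(u_j) = 0$, so axiom (3) yields $\langle \alpha_i^\vee, \lambda_j \rangle = \vp_i(u_j) \ge 0$ for all $i$; thus $\lambda_j$ is dominant, and it is the natural candidate weight for the isomorphism class of $C_j$.

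Next I would prove claim (i), that $C_j \cong B(\lambda_j)$. This is the substantive point. The approach is to produce a crystal morphism sending $u_j$ to the highest weight vector $u_{\lambda_j}$ of $B(\lambda_j)$ and to show it is an isomorphism, by tracking how the strings of $\ft_i$'s emanating from $u_j$ match those emanating from $u_{\lambda_j}$: the weight and the $\ve_i, \vp_i$ data agree at the top, and one propagates the comparison down each $i$-string, building tile by tile a bijection that respects $\wt$, $\et_i$, and $\ft_i$. The existence and uniqueness of such a morphism is precisely the content of the foundational results being cited, which show that an integrable highest weight crystal is determined up to unique isomorphism by its highest weight vector, whether via the embedding of $B(\lambda)$ into tensor powers of fundamental crystals, via Littelmann's path model, or via the local Stembridge axioms.

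The hard part will be this last matching argument, since in the purely abstract setting there is no a priori guarantee that two crystals with the same highest weight and the same local string data coincide globally; the delicate issue is controlling the interaction of $\et_i$ and $\ft_i$ across distinct colors $i,j$, that is, the two-color local configurations, which is exactly what Kashiwara's grand-loop induction (or the Stembridge axioms) is engineered to handle and what forces uniqueness. Once (i) is in hand, claim (ii) follows formally: any highest weight vector of $C_j$ maps under the isomorphism $C_j \cong B(\lambda_j)$ to a highest weight vector of $B(\lambda_j)$, and since $B(\lambda_j)$ has exactly one such vector, so does $C_j$. This yields the asserted bijections between connected components, irreducible components, and highest weight vectors.
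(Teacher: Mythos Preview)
The paper does not prove this theorem at all: it is stated with citations to Kashiwara, Nakashima, and Littelmann and used as a black box to justify Corollary~\ref{corollary.a}. There is therefore no ``paper's own proof'' to compare your proposal against.

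That said, your outline is a reasonable sketch of the classical argument. You correctly identify that the decomposition into connected components is formal, that the existence of a highest weight vector in each component is exactly the hypothesis (as the paper defines ``category of integrable highest weight crystals''), and that the substantive content is showing each component is isomorphic to some $B(\lambda)$. You are also right that this last step is the hard one and that it is precisely what the cited references establish; your proposal does not actually carry it out but defers to those same sources, which is appropriate here since the paper itself treats the result as background.
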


As we will see in the next section, the irreducible components of our crystal $B(w)$ on decreasing factorization are 
related to the Stanley symmetric functions.

%%%%%%%%%%%%%%%%%%%%%%%%%%%%%%%%%%%%%%%%%%%%%%%%%%%%%%%%%%%%%%%%%
\section{Stanley symmetric functions and crystal on decreasing factorizations}
\label{section.stanley and crystal}

We are now ready to apply crystal theory to Stanley symmetric functions.
By definition~\eqref{equation.stanley}, the Stanley symmetric function $F_w$ is the generating function of decreasing
factorizations of $w\in S_n$. By Theorem~\ref{theorem.crystal}, one can define a crystal structure on the set of decreasing
factorizations of $w$. In turn, by Theorem~\ref{theorem.highestwts} the crystal can be decomposed into irreducible
highest weight crystals $B(\lambda)$ of highest weight $\lambda$. If $B(\lambda)$ is a highest weight crystal of
type $A_{\ell-1}$, then its weight generating function is precisely the Schur polynomial indexed by $\lambda$:
\[
	s_\lambda(x_1,\ldots,x_\ell) = \sum_{b\in B(\lambda)} x^{\wt(b)}.
\]
Denote by $\mathcal W_{w,\lambda}$ all elements in $\mathcal W_w$ of weight $\lambda$.

Choosing $\ell$ sufficiently large, the above arguments immediately yield the following result.
\begin{corollary} \cite{MS:2014}
\label{corollary.a}
For any $w\in S_n$, the coefficient $a_{w,\lambda}$ in
\begin{equation}
\label{equation.F schur expansion}
    F_w = \sum_\lambda a_{w,\lambda}\, s_\lambda
\end{equation}
enumerates the highest weight factorizations in $\mathcal W_{w,\lambda}$. That is
\[
    a_{w,\lambda} = \# \{v^\ell \cdots v^1 \in \mathcal W_{w,\lambda} \mid \et_i (v^\ell \cdots v^1) = \bf{0} \text{ for all $1\le i<\ell$}\}\;.
\]
\end{corollary}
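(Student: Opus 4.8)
The plan is to show that, after restricting to finitely many variables, the Stanley symmetric function $F_w$ is literally the character (weight generating function) of the crystal $B(w)$ on $\mathcal{W}_w^\ell$, and then to read off the Schur expansion from the decomposition of this crystal into irreducible components. The crystal-theoretic heavy lifting has already been cited, so the work is to set up the generating-function dictionary and to extract coefficients cleanly.

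First I would fix $\ell \ge \ell(w)$ and record the generating-function identity
\[
    F_w(x_1,\ldots,x_\ell) = \sum_{b \in B(w)} x^{\wt(b)},
\]
where $B(w) = \mathcal{W}_w^\ell$. This is immediate from the definition~\eqref{equation.stanley}: setting $x_{\ell+1}=x_{\ell+2}=\cdots=0$ retains exactly the factorizations of $w$ into at most $\ell$ nontrivial factors, and padding each such factorization by trivial factors identifies these with the elements of $\mathcal{W}_w^\ell$, with $x_1^{\ell(v^1)}\cdots x_\ell^{\ell(v^\ell)} = x^{\wt(v^\ell\cdots v^1)}$ by the definition of $\wt$.

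Next I invoke Theorem~\ref{theorem.crystal}: $B(w)$ is a $U_q(A_{\ell-1})$-crystal coming from a module, hence lies in the category of integrable highest weight crystals. By Theorem~\ref{theorem.highestwts} it decomposes into connected components, each an irreducible highest weight crystal $B(\lambda)$ whose highest weight $\lambda = \wt(u)$ is the weight of the unique highest weight vector $u$ in that component, and the components are in bijection with the highest weight vectors of $B(w)$. Since the weight generating function of $B(\lambda)$ of type $A_{\ell-1}$ is the Schur polynomial $s_\lambda(x_1,\ldots,x_\ell)$, summing over components gives
\[
    F_w(x_1,\ldots,x_\ell) = \sum_{\substack{u\in B(w)\\ \et_i u = \mathbf{0}\ \forall i}} s_{\wt(u)}(x_1,\ldots,x_\ell)
    = \sum_\lambda c_{w,\lambda}^{(\ell)}\, s_\lambda(x_1,\ldots,x_\ell),
\]
where $c_{w,\lambda}^{(\ell)}$ is the number of highest weight factorizations in $\mathcal{W}_w^\ell$ of weight $\lambda$.

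Finally I extract the coefficients. The weight of any highest weight vector is dominant, hence a partition of $\ell(w)$ with $\ell(\lambda)\le|\lambda|=\ell(w)\le\ell$ parts, so for $\ell\ge\ell(w)$ every $\lambda$ with $a_{w,\lambda}\neq 0$ is captured. Because the Schur polynomials $\{s_\lambda(x_1,\ldots,x_\ell) : \ell(\lambda)\le \ell\}$ are linearly independent, comparing the last display with the restriction to $\ell$ variables of the Schur expansion~\eqref{equation.F schur expansion} forces $a_{w,\lambda}= c_{w,\lambda}^{(\ell)}$, which is exactly the count in the statement. The one point that requires a little care--and the only place where something could go wrong--is the independence of $c_{w,\lambda}^{(\ell)}$ from the choice of (sufficiently large) $\ell$: adjoining a trivial factor $v^{\ell+1}=1$ at the left sends a highest weight element of $\mathcal{W}_w^\ell$ to one of $\mathcal{W}_w^{\ell+1}$ of the same weight, since an empty top factor leaves the set of unpaired letters in the $v^{\ell+1}v^\ell$-bracketing empty so that $\et_\ell$ still annihilates it while the remaining $\et_i$ act unchanged, and this is a weight-preserving bijection once $\ell\ge\ell(w)$. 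This stability, together with the linear independence step, is the main content beyond the cited theorems.
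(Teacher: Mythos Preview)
Your proof is correct and follows essentially the same approach as the paper: interpret $F_w(x_1,\ldots,x_\ell)$ as the weight generating function of the crystal $B(w)$ on $\mathcal{W}_w^\ell$, decompose via Theorems~\ref{theorem.crystal} and~\ref{theorem.highestwts} into irreducible highest weight crystals whose characters are Schur polynomials, and read off the coefficients for $\ell$ sufficiently large. The paper states this argument in the paragraph preceding the corollary and simply asserts that ``choosing $\ell$ sufficiently large, the above arguments immediately yield'' the result; your additional care with the linear independence of Schur polynomials and the stability of the highest weight count under increasing $\ell$ just makes explicit what the paper leaves implicit.
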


Edelman and Greene~\cite{EG:1987} (see also~\cite[Theorem 1.2]{FG:1998}) characterized 
the coefficients $a_{w,\lambda}$ as the number of semi-standard 
tableaux of shape $\lambda'$ (the transpose of $\lambda$) whose column-reading word 
is a reduced word of $w$.  This implies the following.

\begin{corollary} \cite{MS:2014}
\label{corollary.a bij}
For any permutation $w\in S_n$ and partition $\lambda$,
there is a bijection between the highest weight factorizations,
\[
	\{v^\ell \cdots v^1 \in \mathcal W_{w,\lambda} \mid \et_i (v^\ell \cdots v^1) = \bf{0} \text{ for all $1\le i<\ell$}\}\,,
\]
and the semi-standard tableaux of shape $\lambda'$ 
whose column-reading word is a reduced word of $w$.
\end{corollary}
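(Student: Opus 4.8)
The plan is to prove the statement enumeratively, since both sides are counted by the same Schur coefficient $a_{w,\lambda}$. First I would invoke Corollary~\ref{corollary.a}: for $\ell$ taken large enough that every irreducible component of weight $\lambda$ is realized (as in the discussion preceding Corollary~\ref{corollary.a}), the set of highest weight factorizations
\[
	H_{w,\lambda} = \{v^\ell \cdots v^1 \in \mathcal{W}_{w,\lambda} \mid \et_i(v^\ell \cdots v^1) = \mathbf{0} \text{ for all } 1\le i<\ell\}
\]
has cardinality exactly $a_{w,\lambda}$. Next I would invoke the Edelman--Greene characterization recalled just above: the set $T_{w,\lambda}$ of semistandard tableaux of shape $\lambda'$ whose column-reading word is a reduced word of $w$ also has cardinality $a_{w,\lambda}$. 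Both $H_{w,\lambda}$ and $T_{w,\lambda}$ are finite of the same size, so any choice of orderings matches them into a bijection. This already establishes the corollary as stated.

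For a bijection that respects the combinatorics rather than mere cardinality, the natural candidate routes through the Edelman--Greene insertion, which is precisely the device making the two enumerations agree. To a factorization $v^\ell \cdots v^1 \in H_{w,\lambda}$ I would associate the reduced word of $w$ obtained by concatenating the decreasing reduced words of the factors $v^\ell, \ldots, v^1$, apply Edelman--Greene insertion, and read off the insertion tableau; up to the transpose inherent in the decreasing-factorization convention this has shape $\lambda'$, and one checks that its column-reading word is again a reduced word of $w$, so that it lands in $T_{w,\lambda}$. Verifying that this is a bijection, i.e.\ that the highest weight factorizations are exactly the preimages of the individual insertion tableaux, is the statement that the crystal structure on decreasing factorizations intertwines with Edelman--Greene insertion, alluded to in the introduction.

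The honest assessment of the obstacle is twofold. For the existence claim actually made there is essentially none: it is the cardinality match above, the only delicate point being to choose $\ell$ large enough in Corollary~\ref{corollary.a}. For the explicit bijection the obstacle is genuine, and it is instructive to see why one cannot simply pour the content $\cont(v^i)$ of each factor into the $i$-th column of $\lambda'$ in increasing order: although this yields strictly increasing columns, the rows need not be weakly increasing. For instance, the highest weight factorization $(s_1)(s_3 s_2)$ of $w = s_1 s_3 s_2$ (that is, $v^2 = s_1$ and $v^1 = s_3 s_2$) has weight $(2,1)$, yet its naive column-filling $\left[\begin{smallmatrix} 2 & 1 \\ 3 \end{smallmatrix}\right]$ fails to be semistandard, whereas the correct Edelman--Greene tableau is $\left[\begin{smallmatrix} 1 & 2 \\ 3 \end{smallmatrix}\right]$, whose column-reading word $132$ is indeed reduced for $w$. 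Thus the weak-increase condition on rows does not correspond cell-by-cell to the bracketing condition $\et_i = \mathbf{0}$, and the reordering effected by Edelman--Greene insertion is unavoidable; establishing its compatibility with the operators $\et_i$ is where all the work would lie.
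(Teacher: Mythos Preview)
Your proposal is correct and matches the paper's approach: the paper derives the corollary purely from the cardinality match between Corollary~\ref{corollary.a} and the Edelman--Greene characterization stated just before it, and then separately identifies the explicit bijection as $v^\ell\cdots v^1 \mapsto P^t$ under EG insertion, deferring its justification to Theorem~\ref{theorem.intertwine}. Your counterexample showing that the naive column-fill fails is a helpful addition not present in the paper.
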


The bijection mentioned in Corollary~\ref{corollary.a bij} is given explicitly by a variant of the Edelman--Greene (EG) 
insertion~\cite{EG:1987} (see also~\cite{BJS:1993}). In fact, it can be extended to the full crystal (not just highest weight elements) as follows. 
Given a decreasing factorization $v^\ell \cdots v^1 \in \mathcal W_{w}$, consider $\overline{v}^1 \cdots \overline{v}^\ell$ 
by reversing all factors. In particular, each decreasing factor $v^i$ turns into an increasing factor $\overline{v}^i$. 
Now successively insert the factors $\overline{v}^i$ for $i=1,2,\ldots, \ell$ using the EG insertion. In this
insertion a letter $a$ is inserted into a row by finding the smallest letter $b>a$. If $b=a+1$ and $a$ is also contained in the row, then
$a+1$ is inserted into the next row up. Otherwise, $b$ is replaced by $a$ and inserted into the next row up.
In both cases, we consider $b$ to be bumped.
For each inserted factor $\overline{v}^i$, the cells in the new shape are recorded by letters $i$. This yields
a correspondence $\varphi_{\operatorname{EG}} \colon v^\ell \cdots v^1 \mapsto (P,Q)$, where $P$ is the EG insertion tableau 
and $Q$ is the EG recording tableau. Then, according to the next theorem, the bijection of Corollary~\ref{corollary.a bij}
is explicitly the transpose of the insertion tableau $\varphi_{\operatorname{EG}}^P(v^\ell \cdots v^1)=P$ of the highest 
weight element $v^\ell \cdots v^1$.

\begin{theorem} \cite{MS:2014}
\label{theorem.intertwine}
For any permutation $w\in S_n$, the crystal isomorphism
\[
	B(w) \cong \bigoplus_\lambda B(\lambda)^{\oplus a_{w,\lambda}}
\]
is explicitly given by $\varphi_{\operatorname{EG}}^Q(v^\ell \cdots v^1) = Q$. In particular,
\[
	\varphi_{\operatorname{EG}}^Q \circ \et_i = \et_i \circ \varphi_{\operatorname{EG}}^Q 
	\qquad \text{and} \qquad
	\varphi_{\operatorname{EG}}^Q \circ \ft_i = \ft_i \circ \varphi_{\operatorname{EG}}^Q.
\]
\end{theorem}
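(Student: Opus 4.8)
The plan is to prove the intertwining relation for $\et_i$; the statement for $\ft_i$ then follows from crystal axiom~(1) together with the bijectivity of $\varphi_{\operatorname{EG}}$, since $\et_i$ and $\ft_i$ are mutually inverse partial maps on both the factorization side and the tableau side. Writing $\varphi_{\operatorname{EG}}(v^\ell \cdots v^1) = (P,Q)$, I would establish, whenever $\et_i(v^\ell \cdots v^1) \ne \bf{0}$, the two assertions: (a) $\varphi_{\operatorname{EG}}^P \circ \et_i = \varphi_{\operatorname{EG}}^P$, i.e.\ the insertion tableau is fixed by $\et_i$; and (b) $\varphi_{\operatorname{EG}}^Q \circ \et_i = \et_i \circ \varphi_{\operatorname{EG}}^Q$, where the operator on the right is the type $A$ tableau crystal operator. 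Granting (a) and (b), the theorem follows: $\varphi_{\operatorname{EG}}$ is a bijection by the Edelman--Greene theorem, so by~(a) each connected component of $B(w)$ lands in a single fibre $\{P = P_0\}$, and its highest weight element maps under $\varphi_{\operatorname{EG}}^P$ to the insertion tableau enumerated by Corollary~\ref{corollary.a bij}; by~(b) the component is carried isomorphically onto the tableau crystal $B(\lambda)$ through $\varphi_{\operatorname{EG}}^Q$, realizing the multiplicities $a_{w,\lambda}$ of Corollary~\ref{corollary.a}.

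The crucial reduction is that both sides are \emph{local} in $i$. On the factorization side $\et_i$ alters only the two factors $v^{i+1} v^i$. On the tableau side the type $A$ operator $\et_i$ depends only on the relative positions of the entries $i$ and $i+1$ in $Q$ (the usual signature rule) and ignores all other entries. Moreover, in the Edelman--Greene insertion of $\overline{v}^1 \cdots \overline{v}^\ell$ the recording tableau only ever gains cells, so the positions of the $i$- and $(i\!+\!1)$-labelled cells of $Q$ are created exactly when $\overline{v}^i$ and then $\overline{v}^{i+1}$ are inserted into the intermediate tableau $T := \varphi_{\operatorname{EG}}^P(v^{i-1} \cdots v^1)$, and are never disturbed by the later factors $\overline{v}^{i+2}, \ldots, \overline{v}^\ell$. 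I would therefore reduce both (a) and (b) to a purely local statement about inserting the two consecutive increasing words $\overline{v}^i$ and $\overline{v}^{i+1}$ into a fixed Edelman--Greene tableau $T$: since the part of the insertion coming from the factors below $i$ and above $i+1$ is deterministic and identical before and after the move, it suffices to compare the outcomes of the two double insertions.

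The core lemma to establish is that this double insertion translates the factorization pairing into the tableau signature rule. Concretely, the letters of $\cont(v^{i+1})$ left unpaired in the $w^{i+1} w^i$-pairing should be exactly the entries recorded by the unbracketed $(i\!+\!1)$-cells of $Q$, and applying $\et_i$ --- removing $b = \min(L_i)$ from $\cont(v^{i+1})$ and adjoining $b-t$ to $\cont(v^i)$ --- should produce the \emph{same} insertion tableau $T'$ while relabelling the lowest unbracketed $(i\!+\!1)$-cell to $i$, which is precisely the tableau operator. I would prove this by following the Edelman--Greene bumping one letter at a time, showing that each letter of $\overline{v}^{i+1}$ either creates a new $(i\!+\!1)$-cell (matching an unpaired letter) or bumps along a row (matching a paired letter), with the horizontal-strip shifts $t$ and $s$ in the definitions of $\et_i$ and $\ft_i$ being exactly what is needed to leave $T'$ unchanged when runs of consecutive values are displaced into the next row. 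The main obstacle is precisely this local verification: the Edelman--Greene special rule (triggered when the bumped value equals $a+1$ with $a$ already present in the row) makes the cell bookkeeping delicate, and one must check carefully that unpaired letters correspond to unbracketed cells and that the shifted move reproduces the bumping outcome without altering $P$. This is the combinatorial heart of the argument; once it is in place, (a) and (b), and hence the theorem, follow.
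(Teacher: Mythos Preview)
The paper does not actually contain a proof of this theorem: it is an expository survey, and Theorem~\ref{theorem.intertwine} is simply quoted from~\cite{MS:2014} without argument. So there is no in-paper proof to compare your proposal against.

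That said, your outline is the natural direct approach and is sound in structure. The localization step is correct: on the factorization side $\et_i$ only touches $v^{i+1}v^i$, on the tableau side the crystal operator depends only on the cells labelled $i$ and $i+1$ in $Q$, and those cells are created precisely during the insertion of $\overline{v}^i$ and $\overline{v}^{i+1}$ into the intermediate tableau and are never moved afterward. Reducing to a two-factor statement about a fixed Edelman--Greene tableau $T$ is therefore legitimate, and splitting the claim into ``$P$ is unchanged'' and ``$Q$ transforms by the tableau operator'' is the right decomposition. Two small points you should make explicit: you also need the boundary case, namely that $L_i=\emptyset$ on the factorization side if and only if there is no unbracketed $(i{+}1)$-cell in $Q$ (so that $\et_i=\bf{0}$ matches on both sides), and you should note that the equality of intermediate $P$-tableaux after inserting through factor $i{+}1$ is what guarantees the remaining insertions of $\overline{v}^{i+2},\ldots,\overline{v}^\ell$ proceed identically. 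The genuinely hard part, as you acknowledge, is the local lemma translating the $w^{i+1}w^i$-pairing into the $i/(i{+}1)$ signature on $Q$ and tracking the shift $t$ through the Edelman--Greene special bumping rule; your proposal describes what must be checked but does not carry it out, so as written it is a correct plan rather than a complete proof.
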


\begin{example}
Take $v^3v^2v^1=(1)(2)(32)$ a factorization of the permutation $s_1s_2s_3s_2\in S_4$.
Then $\overline{v}^1\overline{v}^2 \overline{v}^3 = (23)(2)(1)$ with insertions for $i=1,2,3$:
\[
	\bigl( \;\tableau[scY]{2&3}\;,\; \tableau[scY]{1&1} \;\bigr) \qquad
	\bigl(\;\tableau[scY]{3\cr 2&3}\;,\; \tableau[scY]{2\cr 1&1} \;\bigr) \qquad
	\bigl(\;\tableau[scY]{3\cr 2\cr 1&3} \;,\; \tableau[scY]{3\cr 2\cr 1&1} \;\bigr)\;=\; (P,Q)\;.
\]
The element $(1)(2)(32)$ is highest weight of weight $(2,1,1)$ and the column-reading word of
the transpose of $P$
\[
	P^t = \tableau[scY]{3 \cr 1&2&3}
\]
is $3123$ which is indeed a reduced word for $s_1s_2s_3s_2$ demonstrating the 
bijective correspondence of Corollary~\ref{corollary.a bij}.
\end{example}

Another immediate outcome of our crystal $B(w)$ is Stanley's famous result~\cite{Stanley.1984} that the
number of reduced expressions for the longest element $w_0 \in S_n$ is equal to the number of standard 
tableaux of staircase shape $\rho=(n-1,n-2,\ldots,1)$. Namely, in $B(w_0)$ there is only one highest weight element
given by the factorization $(s_1)(s_2 s_1) (s_3 s_2 s_1) \cdots (s_{n-1} s_{n-2} \cdots s_1)$. Hence
$B(w_0)$ is isomorphic to the highest weight crystal $B(\rho)$. The reduced words of $w_0$ are precisely
given by the factorizations of weight $(1,1,\ldots,1)$. In $B(\rho)$ they are the standard tableaux of shape $\rho$.
The bijection between the reduced words of $w_0$ and standard tableaux of shape $\rho$ induced by the
crystal isomorphism is precisely $\varphi_{\operatorname{EG}}^Q$ (which due to the initial reversal of the factorization
gives the transpose of the standard tableau from the straight EG insertion).
An example of this crystal isomorphism for $B(s_1 s_2 s_1)$ in $S_3$ is given in Figure~\ref{figure.crystals}.

By Theorem~\ref{theorem.intertwine}, the crystal $B(w)$ relates to the crystals on the recording tableaux under the EG correspondence.
It was proved in~\cite{EG:1987} that two reduced words EG insert to the same $P$ tableau if and only if they are
Coxeter--Knuth equivalent. Two reduced words are Coxeter--Knuth equivalent if one can be obtained from the other
by a sequence of Coxeter--Knuth relations on three consecutive letters
\begin{equation}
\label{equation.ck relations}
	(a+1)a(a+1) \sim a(a+1)a, \qquad b a c \sim b c a, \qquad cab \sim acb,
\end{equation}
where the last two relations only hold when $a<b<c$. The Coxeter--Knuth graph $\mathcal{CK}(w)$ for $w\in S_n$ is a graph
on the reduced words for $w$ where two words are connected if they differ by a relation in~\eqref{equation.ck relations}.

There is an interesting relation between the crystal $B(w)$ and its decomposition into irreducible components and
the connected components of the Coxeter--Knuth graph. 
\begin{proposition}
Let $w\in S_n$. The connected components of $\mathcal{CK}(w)$ are in one-to-one correspondence with the
connected components of $B(w)$.
\end{proposition}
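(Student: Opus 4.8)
The plan is to show that the Edelman--Greene insertion tableau $\varphi_{\operatorname{EG}}^P$ is a complete invariant for the connected components on \emph{both} sides, and that the two resulting sets of tableaux coincide. I would not attempt to compare the two graphs move-by-move; instead I route everything through the insertion tableau.

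First I would establish that two decreasing factorizations lie in the same connected component of $B(w)$ if and only if they have the same insertion tableau $\varphi_{\operatorname{EG}}^P$. By Theorem~\ref{theorem.intertwine} the map $\varphi_{\operatorname{EG}}^Q$ realizes the crystal isomorphism $B(w)\cong\bigoplus_\lambda B(\lambda)^{\oplus a_{w,\lambda}}$ and commutes with every $\et_i$ and $\ft_i$. Since the EG correspondence $v^\ell\cdots v^1\mapsto(P,Q)$ is a bijection onto pairs of equal shape, each connected component of $B(w)$ is exactly the fiber over a fixed $P$, on which the operators act by moving within the corresponding $B(\lambda)$; in particular $P$ is constant on components and labels them. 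To see that distinct components carry distinct $P$, I would use Theorem~\ref{theorem.highestwts} and Corollary~\ref{corollary.a bij}: the highest weight element of a component has recording tableau $Q$ equal to the unique highest weight tableau of its shape, so equal $P$ forces equal shape, hence equal $Q$, hence (by injectivity of the EG correspondence) equal elements and equal components. Thus the components of $B(w)$ are in bijection with the insertion tableaux occurring among the decreasing factorizations of $w$.

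Second, I would invoke the theorem of Edelman and Greene~\cite{EG:1987} quoted above: two reduced words are Coxeter--Knuth equivalent exactly when they share the same EG insertion tableau, so the components of $\mathcal{CK}(w)$ are likewise in bijection with a set of insertion tableaux, now those occurring among reduced words. It remains to match the two tableau sets. A reduced word of $w$ is precisely a decreasing factorization of weight $(1,1,\ldots,1)$ (all factors of length one), so reduced words sit inside $B(w)$; and since the insertion tableau depends only on the underlying word inserted, refining any decreasing factorization into singletons leaves $\varphi_{\operatorname{EG}}^P$ unchanged. Hence every insertion tableau arising from a factorization already arises from a reduced word, and the two sets coincide. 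Equivalently, I would argue that every component of $B(w)$ meets the reduced words: each summand $B(\lambda)$ with $\lambda\vdash\ell(w)$ contains $f^\lambda\ge 1$ elements of weight $(1^{\ell(w)})$. Then the assignment sending a Coxeter--Knuth class to the unique component of $B(w)$ containing its words is well defined, injective (same component among reduced words forces equal $P$, hence Coxeter--Knuth equivalence), and surjective, giving the desired bijection; as a sanity check, both sides are counted by $\sum_\lambda a_{w,\lambda}$.

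The main obstacle I anticipate is reconciling the two insertion conventions. Theorem~\ref{theorem.intertwine} is phrased through the recording tableau $Q$ of the decreasing-factorization insertion, which first \emph{reverses} each factor, whereas the Coxeter--Knuth theorem of~\cite{EG:1987} is phrased through the insertion tableau $P$ of the raw reduced word. The bookkeeping must track that inserting the reversed singletons of a reduced word of $w$ produces the EG tableau of a reduced word of $w^{-1}$ (the transpose/inverse already visible in the example, where $P^t$ has column reading word $3123$). This is harmless because reversal is a graph isomorphism $\mathcal{CK}(w)\cong\mathcal{CK}(w^{-1})$: the relations in~\eqref{equation.ck relations} are permuted among themselves under reversal. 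Once it is checked that $\varphi_{\operatorname{EG}}^P$ of a reduced word agrees, up to this reversal and transpose, with its classical EG tableau, both the explicit bijection and the count follow.
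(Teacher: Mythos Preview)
Your argument is correct, and it is genuinely different from the paper's. The paper does precisely what you say you will avoid: it compares the two graphs move by move. In one direction it checks that a single Coxeter--Knuth relation on the reduced word, viewed as an all-singleton factorization, is realized by the crystal string $\ft_i\ft_{i+1}\et_i\et_{i+1}$; in the other direction it appeals to \cite[Lemma~3.8]{MS:2014} to see that each crystal operator $\et_i$ changes the underlying reduced word only by Coxeter--Knuth relations. Your route instead uses Theorem~\ref{theorem.intertwine} together with the Edelman--Greene theorem (quoted just above the proposition) to identify the components on both sides with the set of EG insertion tableaux $P$, and then matches the two tableau sets by observing that every component of $B(w)$ meets the weight-$(1^{\ell(w)})$ locus. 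The trade-off is this: the paper's proof is lighter on prerequisites (it does not need the full intertwining Theorem~\ref{theorem.intertwine}, only the local combinatorics of the operators) and it yields an explicit crystal realization of each Coxeter--Knuth move; your proof is more structural and makes the bijection completely transparent as ``same $P$-tableau,'' at the cost of invoking Theorem~\ref{theorem.intertwine} as a black box and having to track the reversal convention you flag (which you handle correctly, since the relations~\eqref{equation.ck relations} are permuted under word reversal).
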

\begin{proof}
Every reduced word of $w$ can be viewed as an elements of $B(w)$ by placing each letter in its own factor
(assuming that $\ell$ is bigger than $\ell(w)$). Suppose $\mathfrak{w},\mathfrak{v}\in \Red(w)$ differ by a single 
relation~\ref{equation.ck relations} with $\mathfrak{w}$ having 3 consecutive letters of the left hand side and 
$\mathfrak{v}$ the corresponding letters of the right hand side. Viewing $\mathfrak{w}$ and $\mathfrak{v}$
as elements of $B(w)$, it is not hard to check that $\ft_i \ft_{i+1} \et_i \et_{i+1}(\mathfrak{w})=\mathfrak{v}$
which proves that two elements in the same component in $\mathcal{CK}(w)$ are also in the same crystal
component.

Conversely, suppose $b,b'\in B(w)$ with $\et_i(b)=b'$, so that $b$ and $b'$ lie in the same component in
$B(w)$. We can view $b$ and $b'$ as reduced words of $w$ by disregarding the grouping into factors.
By~\cite[Lemma 3.8]{MS:2014}, $b'$ is obtained from $b$ by a sequence of braid and
commutation moves. By a close inspection of the proof of~\cite[Lemma 3.8]{MS:2014} only the Coxeter--Knuth
relations~\eqref{equation.ck relations} are used. A similar argument holds for $\ft_i$.
This implies that if $b, b'\in B(w)$ are in the same component, then the corresponding
reduced words are in the same component in $\mathcal{CK}(w)$.
\end{proof}

Given that the Edelman--Greene correspondence maps a factorization to a pair of tableaux and Theorem~\ref{theorem.intertwine}
relates the crystal on decreasing factorizations to the crystal on the recording tableaux $Q$, a natural question to ask
is whether there is a ``dual'' crystal on the $P$-tableaux. By~\cite[Theorem 1.2]{HamakerYoung:2013}, two reduced words
have the same recording tableau under the EG insertion if and only if they are connected by Little bumps~\cite{Little:2005}.

\begin{openproblem}
Describe a crystal structure on the $P$-tableaux under the Edelman--Greene correspondence.
\end{openproblem}

In~\cite{MS:2014} the crystal is defined more generally on certain affine permutations into cyclically decreasing factors. Lam~\cite{Lam:2006} 
defined analogues of the Stanley symmetric functions in terms of cyclically decreasing elements.
In~\cite{MS:2014} the crystal on these affine permutations is used to study $k$-Schur structure coefficients and
further applications to flag Gromov--Witten invariances, fusion coefficients, and positroid varieties.

%%%%%%%%%%%%%%%%%%%%%%%%%%%%%%%%%%%%%%%%%%%%%%%%%%%%%%%%%%%%%%%%%
\section{Exchange Markov chain}
\label{section.markov}

In this section we are going to define an exchange walk on the reduced words of the longest element $w_0$
of a finite Coxeter group $W$ as first introduced in~\cite{ASST:2014}. In particular, one can consider
the symmetric group $W=S_n$.

Let $(W,S)$ be a finite Coxeter system~\cite{BB:2005}, where $W$ is a finite Coxeter group and $S=\{s_1,s_2,\ldots, s_n\}$
are simple generators. (For example, $S=\{s_1, s_2,\ldots, s_n\}$ are indeed the simple transpositions for the symmetric 
group $S_{n+1}$). 
As for the symmetric group we have the notion of reduced expressions $s_{i_1} \cdots s_{i_\ell}$ for an element 
$w\in W$ which satisfy $w=s_{i_1} \cdots s_{i_\ell}$ and there is no shorter expression with the same property.
The corresponding word $i_1 i_2 \cdots i_\ell$ is called a reduced word for $w$ and the set of all reduced words
is denoted by $\Red(w)$. Then $\ell$ is called the length $\ell(w)$ of $w$. The left weak order on $W$ is defined as follows. 
Let $w,v\in W$. We say  that $w$ covers $v$ if there exists a generator $s \in S$ such that $w=s v$ and $\ell(w)=\ell(v)+1$. 
Alternatively, a left weak cover can be characterized by requiring that $w$ has a reduced word $i_1 i_2 \ldots i_k$ such that 
$i_2 \cdots i_k$ is a reduced word for $v$. Left weak order is then the transitive closure of the cover relations.

The reduced expressions of $w_0$ can be viewed as the set of maximal chains in the left weak order.
Let $i\in I:=\{1,2,\ldots,n\}$ and $i_1 \cdots i_k \in \Red(w_0)$ a reduced word for $w_0$.
Then by the exchange condition~\cite{BB:2005}, there is a unique index $1\le j\le k$ such that $e_i(i_1\cdots i_k):=
i\; i_1\cdots \widehat{i_j}\cdots i_k$ is a reduced decomposition of $w_0$, where $i$ is added in the front and
$i_j$ is omitted.

\begin{example}
Let $123121\in \Red(w_0)$ for $w_0\in S_4$ and $i=2$. Then $e_2(123121)=2123\widehat{1}21=212321$.
\end{example}

The \textit{transition graph} is a graph whose vertices are the reduced words of $w_0\in W$ and there is an edge
labeled $i\in I$ from word $\mathfrak w\in \Red(w_0)$ to word $\mathfrak v\in \Red(w_0)$ if $e_i(\mathfrak w)=\mathfrak v$.
Examples for $n=2$ and $n=3$ are given in Figures~\ref{figure.Markov n=2} and~\ref{figure.Markov n=3}.

\begin{figure}
\begin{center}
\begin{tikzpicture}[>=latex,line join=bevel,]
\node (node_1) at (21.012000bp,9.500000bp) [draw,draw=none] {$\left(2, 1, 2\right)$};
  \node (node_0) at (21.012000bp,83.500000bp) [draw,draw=none] {$\left(1, 2, 1\right)$};
  \draw [red,->] (node_0) ..controls (8.682100bp,69.822000bp) and (4.157300bp,63.276000bp)  .. (2.012100bp,56.500000bp) .. controls (-0.670720bp,48.026000bp) and (-0.670720bp,44.974000bp)  .. (2.012100bp,36.500000bp) .. controls (3.118300bp,33.006000bp) and (4.857000bp,29.573000bp)  .. (node_1);
  \definecolor{strokecol}{rgb}{0.0,0.0,0.0};
  \pgfsetstrokecolor{strokecol}
  \draw (11.012000bp,46.500000bp) node {$2$};
  \draw [blue,->] (node_1) ..controls (21.012000bp,29.611000bp) and (21.012000bp,49.680000bp)  .. (node_0);
  \draw (30.012000bp,46.500000bp) node {$1$};
  \draw [blue,->] (node_0) ..controls (49.544000bp,90.957000bp) and (58.012000bp,88.773000bp)  .. (58.012000bp,83.500000bp) .. controls (58.012000bp,80.286000bp) and (54.867000bp,78.220000bp)  .. (node_0);
  \draw (67.012000bp,83.500000bp) node {$1$};
  \draw [red,->] (node_1) ..controls (49.544000bp,16.957000bp) and (58.012000bp,14.773000bp)  .. (58.012000bp,9.500000bp) .. controls (58.012000bp,6.286500bp) and (54.867000bp,4.220400bp)  .. (node_1);
  \draw (67.012000bp,9.500000bp) node {$2$};
\end{tikzpicture}
\end{center}
\caption{Transition graph on reduced words for $w_0\in S_3$.
\label{figure.Markov n=2}}
\end{figure}

\begin{figure}
\begin{center}
\scalebox{0.45}{
\begin{tikzpicture}[>=latex,line join=bevel,]
\node (node_14) at (315.000000bp,305.500000bp) [draw,draw=none] {$\left(3, 2, 1, 3, 2, 3\right)$};
  \node (node_3) at (361.000000bp,755.500000bp) [draw,draw=none] {$\left(1, 3, 2, 1, 3, 2\right)$};
  \node (node_9) at (373.000000bp,9.500000bp) [draw,draw=none] {$\left(2, 3, 1, 2, 3, 1\right)$};
  \node (node_8) at (223.000000bp,606.500000bp) [draw,draw=none] {$\left(2, 3, 1, 2, 1, 3\right)$};
  \node (node_7) at (345.000000bp,380.500000bp) [draw,draw=none] {$\left(2, 1, 3, 2, 3, 1\right)$};
  \node (node_6) at (579.000000bp,380.500000bp) [draw,draw=none] {$\left(2, 1, 3, 2, 1, 3\right)$};
  \node (node_5) at (270.000000bp,903.500000bp) [draw,draw=none] {$\left(2, 1, 2, 3, 2, 1\right)$};
  \node (node_4) at (399.000000bp,455.500000bp) [draw,draw=none] {$\left(1, 3, 2, 3, 1, 2\right)$};
  \node (node_13) at (270.000000bp,829.500000bp) [draw,draw=none] {$\left(3, 2, 1, 2, 3, 2\right)$};
  \node (node_2) at (211.000000bp,157.500000bp) [draw,draw=none] {$\left(1, 2, 3, 2, 1, 2\right)$};
  \node (node_1) at (65.000000bp,977.500000bp) [draw,draw=none] {$\left(1, 2, 3, 1, 2, 1\right)$};
  \node (node_0) at (676.000000bp,305.500000bp) [draw,draw=none] {$\left(1, 2, 1, 3, 2, 1\right)$};
  \node (node_11) at (396.000000bp,681.500000bp) [draw,draw=none] {$\left(3, 1, 2, 1, 3, 2\right)$};
  \node (node_10) at (211.000000bp,231.500000bp) [draw,draw=none] {$\left(2, 3, 2, 1, 2, 3\right)$};
  \node (node_15) at (223.000000bp,530.500000bp) [draw,draw=none] {$\left(3, 2, 3, 1, 2, 3\right)$};
  \node (node_12) at (373.000000bp,83.500000bp) [draw,draw=none] {$\left(3, 1, 2, 3, 1, 2\right)$};
  \draw [blue,->] (node_4) ..controls (442.280000bp,462.030000bp) and (450.000000bp,459.790000bp)  .. (450.000000bp,455.500000bp) .. controls (450.000000bp,452.820000bp) and (446.980000bp,450.940000bp)  .. (node_4);
  \definecolor{strokecol}{rgb}{0.0,0.0,0.0};
  \pgfsetstrokecolor{strokecol}
  \draw (459.000000bp,455.500000bp) node {$1$};
  \draw [red,->] (node_9) ..controls (416.280000bp,16.034000bp) and (424.000000bp,13.789000bp)  .. (424.000000bp,9.500000bp) .. controls (424.000000bp,6.819300bp) and (420.980000bp,4.937100bp)  .. (node_9);
  \draw (433.000000bp,9.500000bp) node {$2$};
  \draw [green,->] (node_9) ..controls (466.620000bp,23.006000bp) and (587.000000bp,45.761000bp)  .. (587.000000bp,83.500000bp) .. controls (587.000000bp,305.500000bp) and (587.000000bp,305.500000bp)  .. (587.000000bp,305.500000bp) .. controls (587.000000bp,327.300000bp) and (587.010000bp,335.810000bp)  .. (573.000000bp,352.500000bp) .. controls (561.500000bp,366.200000bp) and (550.290000bp,358.530000bp)  .. (537.000000bp,370.500000bp) .. controls (500.170000bp,403.670000bp) and (516.460000bp,434.500000bp)  .. (477.000000bp,464.500000bp) .. controls (444.020000bp,489.580000bp) and (329.700000bp,512.330000bp)  .. (node_15);
  \draw (596.000000bp,268.500000bp) node {$3$};
  \draw [red,->] (node_11) ..controls (345.650000bp,659.670000bp) and (287.090000bp,634.280000bp)  .. (node_8);
  \draw (339.000000bp,644.500000bp) node {$2$};
  \draw [red,->] (node_8) ..controls (266.280000bp,613.400000bp) and (274.000000bp,611.030000bp)  .. (274.000000bp,606.500000bp) .. controls (274.000000bp,603.670000bp) and (270.980000bp,601.680000bp)  .. (node_8);
  \draw (283.000000bp,606.500000bp) node {$2$};
  \draw [green,->] (node_10) ..controls (224.350000bp,253.710000bp) and (238.000000bp,280.610000bp)  .. (238.000000bp,305.500000bp) .. controls (238.000000bp,455.500000bp) and (238.000000bp,455.500000bp)  .. (238.000000bp,455.500000bp) .. controls (238.000000bp,475.160000bp) and (232.980000bp,497.170000bp)  .. (node_15);
  \draw (247.000000bp,380.500000bp) node {$3$};
  \draw [blue,->] (node_5) ..controls (405.910000bp,894.760000bp) and (714.000000bp,871.160000bp)  .. (714.000000bp,829.500000bp) .. controls (714.000000bp,829.500000bp) and (714.000000bp,829.500000bp)  .. (714.000000bp,380.500000bp) .. controls (714.000000bp,358.500000bp) and (700.830000bp,336.410000bp)  .. (node_0);
  \draw (723.000000bp,606.500000bp) node {$1$};
  \draw [blue,->] (node_11) ..controls (372.980000bp,694.610000bp) and (365.820000bp,700.800000bp)  .. (362.000000bp,708.500000bp) .. controls (357.770000bp,717.030000bp) and (357.190000bp,727.560000bp)  .. (node_3);
  \draw (371.000000bp,718.500000bp) node {$1$};
  \draw [green,->] (node_8) ..controls (223.000000bp,585.990000bp) and (223.000000bp,564.630000bp)  .. (node_15);
  \draw (232.000000bp,568.500000bp) node {$3$};
  \draw [blue,->] (node_3) ..controls (404.280000bp,762.030000bp) and (412.000000bp,759.790000bp)  .. (412.000000bp,755.500000bp) .. controls (412.000000bp,752.820000bp) and (408.980000bp,750.940000bp)  .. (node_3);
  \draw (421.000000bp,755.500000bp) node {$1$};
  \draw [blue,->] (node_9) ..controls (248.610000bp,21.511000bp) and (0.000000bp,49.005000bp)  .. (0.000000bp,83.500000bp) .. controls (0.000000bp,903.500000bp) and (0.000000bp,903.500000bp)  .. (0.000000bp,903.500000bp) .. controls (0.000000bp,929.330000bp) and (22.637000bp,950.450000bp)  .. (node_1);
  \draw (9.000000bp,492.500000bp) node {$1$};
  \draw [blue,->] (node_10) ..controls (211.000000bp,211.370000bp) and (211.000000bp,191.300000bp)  .. (node_2);
  \draw (220.000000bp,194.500000bp) node {$1$};
  \draw [blue,->] (node_15) ..controls (274.220000bp,508.670000bp) and (333.800000bp,483.280000bp)  .. (node_4);
  \draw (340.000000bp,492.500000bp) node {$1$};
  \draw [green,->] (node_2) ..controls (258.190000bp,135.940000bp) and (311.730000bp,111.490000bp)  .. (node_12);
  \draw (320.000000bp,120.500000bp) node {$3$};
  \draw [blue,->] (node_6) ..controls (606.430000bp,359.290000bp) and (637.010000bp,335.640000bp)  .. (node_0);
  \draw (648.000000bp,342.500000bp) node {$1$};
  \draw [red,->] (node_3) ..controls (470.440000bp,749.710000bp) and (651.000000bp,733.980000bp)  .. (651.000000bp,681.500000bp) .. controls (651.000000bp,681.500000bp) and (651.000000bp,681.500000bp)  .. (651.000000bp,455.500000bp) .. controls (651.000000bp,428.300000bp) and (626.050000bp,407.130000bp)  .. (node_6);
  \draw (660.000000bp,568.500000bp) node {$2$};
  \draw [blue,->] (node_13) ..controls (296.070000bp,808.300000bp) and (324.170000bp,785.450000bp)  .. (node_3);
  \draw (335.000000bp,792.500000bp) node {$1$};
  \draw [green,->] (node_14) ..controls (358.280000bp,312.030000bp) and (366.000000bp,309.790000bp)  .. (366.000000bp,305.500000bp) .. controls (366.000000bp,302.820000bp) and (362.980000bp,300.940000bp)  .. (node_14);
  \draw (375.000000bp,305.500000bp) node {$3$};
  \draw [blue,->] (node_1) ..controls (108.280000bp,984.030000bp) and (116.000000bp,981.790000bp)  .. (116.000000bp,977.500000bp) .. controls (116.000000bp,974.820000bp) and (112.980000bp,972.940000bp)  .. (node_1);
  \draw (125.000000bp,977.500000bp) node {$1$};
  \draw [green,->] (node_13) ..controls (313.280000bp,836.030000bp) and (321.000000bp,833.790000bp)  .. (321.000000bp,829.500000bp) .. controls (321.000000bp,826.820000bp) and (317.980000bp,824.940000bp)  .. (node_13);
  \draw (330.000000bp,829.500000bp) node {$3$};
  \draw [green,->] (node_0) ..controls (672.040000bp,329.590000bp) and (665.860000bp,363.170000bp)  .. (657.000000bp,390.500000bp) .. controls (641.770000bp,437.460000bp) and (613.000000bp,443.130000bp)  .. (613.000000bp,492.500000bp) .. controls (613.000000bp,606.500000bp) and (613.000000bp,606.500000bp)  .. (613.000000bp,606.500000bp) .. controls (613.000000bp,642.840000bp) and (502.900000bp,665.460000bp)  .. (node_11);
  \draw (622.000000bp,492.500000bp) node {$3$};
  \draw [blue,->] (node_12) ..controls (383.380000bp,127.370000bp) and (413.420000bp,259.640000bp)  .. (423.000000bp,370.500000bp) .. controls (423.770000bp,379.360000bp) and (424.560000bp,381.750000bp)  .. (423.000000bp,390.500000bp) .. controls (420.060000bp,406.970000bp) and (413.230000bp,424.720000bp)  .. (node_4);
  \draw (420.000000bp,268.500000bp) node {$1$};
  \draw [blue,->] (node_0) ..controls (719.280000bp,312.030000bp) and (727.000000bp,309.790000bp)  .. (727.000000bp,305.500000bp) .. controls (727.000000bp,302.820000bp) and (723.980000bp,300.940000bp)  .. (node_0);
  \draw (736.000000bp,305.500000bp) node {$1$};
  \draw [blue,->] (node_7) ..controls (403.000000bp,361.480000bp) and (460.090000bp,343.770000bp)  .. (510.000000bp,332.500000bp) .. controls (551.300000bp,323.170000bp) and (599.070000bp,315.750000bp)  .. (node_0);
  \draw (519.000000bp,342.500000bp) node {$1$};
  \draw [blue,->] (node_8) ..controls (207.080000bp,640.820000bp) and (171.610000bp,717.810000bp)  .. (144.000000bp,782.500000bp) .. controls (116.440000bp,847.070000bp) and (85.733000bp,924.600000bp)  .. (node_1);
  \draw (153.000000bp,792.500000bp) node {$1$};
  \draw [red,->] (node_1) ..controls (125.410000bp,955.690000bp) and (195.110000bp,930.530000bp)  .. (node_5);
  \draw (200.000000bp,940.500000bp) node {$2$};
  \draw [green,->] (node_12) ..controls (416.280000bp,90.034000bp) and (424.000000bp,87.789000bp)  .. (424.000000bp,83.500000bp) .. controls (424.000000bp,80.819000bp) and (420.980000bp,78.937000bp)  .. (node_12);
  \draw (433.000000bp,83.500000bp) node {$3$};
  \draw [blue,->] (node_2) ..controls (254.280000bp,164.030000bp) and (262.000000bp,161.790000bp)  .. (262.000000bp,157.500000bp) .. controls (262.000000bp,154.820000bp) and (258.980000bp,152.940000bp)  .. (node_2);
  \draw (271.000000bp,157.500000bp) node {$1$};
  \draw [green,->] (node_5) ..controls (270.000000bp,883.370000bp) and (270.000000bp,863.300000bp)  .. (node_13);
  \draw (279.000000bp,866.500000bp) node {$3$};
  \draw [red,->] (node_0) ..controls (729.320000bp,324.910000bp) and (768.000000bp,346.290000bp)  .. (768.000000bp,380.500000bp) .. controls (768.000000bp,829.500000bp) and (768.000000bp,829.500000bp)  .. (768.000000bp,829.500000bp) .. controls (768.000000bp,856.930000bp) and (752.550000bp,864.270000bp)  .. (728.000000bp,876.500000bp) .. controls (691.520000bp,894.680000bp) and (420.590000bp,901.050000bp)  .. (node_5);
  \draw (777.000000bp,606.500000bp) node {$2$};
  \draw [green,->] (node_6) ..controls (565.360000bp,360.440000bp) and (550.190000bp,341.510000bp)  .. (532.000000bp,332.500000bp) .. controls (502.360000bp,317.820000bp) and (412.510000bp,310.610000bp)  .. (node_14);
  \draw (565.000000bp,342.500000bp) node {$3$};
  \draw [red,->] (node_2) ..controls (145.280000bp,173.950000bp) and (100.000000bp,193.630000bp)  .. (100.000000bp,231.500000bp) .. controls (100.000000bp,792.500000bp) and (100.000000bp,792.500000bp)  .. (100.000000bp,792.500000bp) .. controls (100.000000bp,853.010000bp) and (176.450000bp,882.280000bp)  .. (node_5);
  \draw (109.000000bp,530.500000bp) node {$2$};
  \draw [green,->] (node_4) ..controls (436.510000bp,442.640000bp) and (447.710000bp,436.620000bp)  .. (456.000000bp,428.500000bp) .. controls (473.040000bp,411.810000bp) and (480.000000bp,404.350000bp)  .. (480.000000bp,380.500000bp) .. controls (480.000000bp,380.500000bp) and (480.000000bp,380.500000bp)  .. (480.000000bp,157.500000bp) .. controls (480.000000bp,133.410000bp) and (471.620000bp,125.790000bp)  .. (453.000000bp,110.500000bp) .. controls (442.350000bp,101.750000bp) and (428.730000bp,95.790000bp)  .. (node_12);
  \draw (489.000000bp,268.500000bp) node {$3$};
  \draw [red,->] (node_15) ..controls (203.280000bp,508.260000bp) and (184.000000bp,481.920000bp)  .. (184.000000bp,455.500000bp) .. controls (184.000000bp,455.500000bp) and (184.000000bp,455.500000bp)  .. (184.000000bp,305.500000bp) .. controls (184.000000bp,285.280000bp) and (193.010000bp,263.730000bp)  .. (node_10);
  \draw (193.000000bp,380.500000bp) node {$2$};
  \draw [red,->] (node_13) ..controls (245.970000bp,786.800000bp) and (177.050000bp,657.360000bp)  .. (163.000000bp,540.500000bp) .. controls (150.050000bp,432.790000bp) and (145.230000bp,402.120000bp)  .. (170.000000bp,296.500000bp) .. controls (174.110000bp,278.980000bp) and (174.300000bp,273.660000bp)  .. (184.000000bp,258.500000bp) .. controls (186.560000bp,254.500000bp) and (189.810000bp,250.620000bp)  .. (node_10);
  \draw (172.000000bp,530.500000bp) node {$2$};
  \draw [green,->] (node_3) ..controls (371.920000bp,741.790000bp) and (376.670000bp,735.040000bp)  .. (380.000000bp,728.500000bp) .. controls (384.610000bp,719.430000bp) and (388.370000bp,708.740000bp)  .. (node_11);
  \draw (396.000000bp,718.500000bp) node {$3$};
  \draw [red,->] (node_5) ..controls (313.280000bp,910.030000bp) and (321.000000bp,907.790000bp)  .. (321.000000bp,903.500000bp) .. controls (321.000000bp,900.820000bp) and (317.980000bp,898.940000bp)  .. (node_5);
  \draw (330.000000bp,903.500000bp) node {$2$};
  \draw [red,->] (node_4) ..controls (384.100000bp,434.800000bp) and (368.070000bp,412.540000bp)  .. (node_7);
  \draw (387.000000bp,418.500000bp) node {$2$};
  \draw [green,->] (node_7) ..controls (336.860000bp,360.140000bp) and (328.310000bp,338.780000bp)  .. (node_14);
  \draw (342.000000bp,342.500000bp) node {$3$};
  \draw [green,->] (node_15) ..controls (266.280000bp,537.400000bp) and (274.000000bp,535.030000bp)  .. (274.000000bp,530.500000bp) .. controls (274.000000bp,527.670000bp) and (270.980000bp,525.680000bp)  .. (node_15);
  \draw (283.000000bp,530.500000bp) node {$3$};
  \draw [red,->] (node_6) ..controls (622.280000bp,387.400000bp) and (630.000000bp,385.030000bp)  .. (630.000000bp,380.500000bp) .. controls (630.000000bp,377.670000bp) and (626.980000bp,375.680000bp)  .. (node_6);
  \draw (639.000000bp,380.500000bp) node {$2$};
  \draw [green,->] (node_1) ..controls (55.603000bp,954.860000bp) and (46.000000bp,927.640000bp)  .. (46.000000bp,903.500000bp) .. controls (46.000000bp,903.500000bp) and (46.000000bp,903.500000bp)  .. (46.000000bp,157.500000bp) .. controls (46.000000bp,128.740000bp) and (240.970000bp,100.270000bp)  .. (node_12);
  \draw (55.000000bp,530.500000bp) node {$3$};
  \draw [red,->] (node_12) ..controls (373.000000bp,63.372000bp) and (373.000000bp,43.301000bp)  .. (node_9);
  \draw (382.000000bp,46.500000bp) node {$2$};
  \draw [red,->] (node_7) ..controls (388.280000bp,387.400000bp) and (396.000000bp,385.030000bp)  .. (396.000000bp,380.500000bp) .. controls (396.000000bp,377.670000bp) and (392.980000bp,375.680000bp)  .. (node_7);
  \draw (405.000000bp,380.500000bp) node {$2$};
  \draw [green,->] (node_11) ..controls (439.280000bp,688.030000bp) and (447.000000bp,685.790000bp)  .. (447.000000bp,681.500000bp) .. controls (447.000000bp,678.820000bp) and (443.980000bp,676.940000bp)  .. (node_11);
  \draw (456.000000bp,681.500000bp) node {$3$};
  \draw [red,->] (node_14) ..controls (291.530000bp,287.410000bp) and (270.550000bp,271.510000bp)  .. (252.000000bp,258.500000bp) .. controls (245.910000bp,254.230000bp) and (239.200000bp,249.720000bp)  .. (node_10);
  \draw (287.000000bp,268.500000bp) node {$2$};
  \draw [blue,->] (node_14) ..controls (309.940000bp,326.680000bp) and (304.860000bp,350.160000bp)  .. (303.000000bp,370.500000bp) .. controls (294.910000bp,458.830000bp) and (328.000000bp,479.800000bp)  .. (328.000000bp,568.500000bp) .. controls (328.000000bp,681.500000bp) and (328.000000bp,681.500000bp)  .. (328.000000bp,681.500000bp) .. controls (328.000000bp,702.500000bp) and (339.220000bp,724.180000bp)  .. (node_3);
  \draw (335.000000bp,530.500000bp) node {$1$};
  \draw [red,->] (node_10) ..controls (254.280000bp,238.030000bp) and (262.000000bp,235.790000bp)  .. (262.000000bp,231.500000bp) .. controls (262.000000bp,228.820000bp) and (258.980000bp,226.940000bp)  .. (node_10);
  \draw (271.000000bp,231.500000bp) node {$2$};
\end{tikzpicture}
}
\end{center}
\caption{Transition graph on reduced words for $w_0\in S_4$.
\label{figure.Markov n=3}}
\end{figure}

We can now consider a Markov chain on the transition graph by defining a probability measure $P$ on $I$
which we call the \emph{exchange walk on $(W,S)$}. The probability measure $P$ is a map
\begin{equation*}
\begin{split}
	P \colon I &\to [0,1] \subset \mathbb{R}\\
	    i &\mapsto P(i)
\end{split}
\end{equation*}
such that $\sum_{i \in I} P(i)=1$. The state space of the Markov chain is $\Red(w_0)$.  Transitions are given by changing 
from state $\mathfrak w$ to state $e_i(\mathfrak w)$ with probability $P(i)$.  

The \textit{transition matrix} $T$ of a discrete-time Markov chain is the matrix, whose entries are indexed by elements of the 
state space. In our case they are labeled by elements in $\Red(w_0)$. We take the convention that the 
$(\mathfrak w', \mathfrak w)$-entry gives the probability of going from $\mathfrak w \to \mathfrak w'$. The special case of the diagonal entry at 
$(\mathfrak w,\mathfrak w)$ gives the probability of a loop at the $\mathfrak w$. This ensures that column sums of $T$ are one and 
consequently, one is an eigenvalue with row (left-) eigenvector being the all-ones vector.
The \textit{stationary distribution} $\pi$ is the (right-) eigenvector of $T$ with eigenvalue 1, that is $T\pi=\pi$.
A Markov chain is irreducible if the transition graph, with vertices in the state space
and an edge from $\mathfrak w$ to $\mathfrak w'$ if $T(\mathfrak w,\mathfrak w')>0$, is strongly connected.
A Markov chain is \textit{ergodic} if it is irreducible and the greatest common divisor of all cycle lengths is one.
Typical questions regarding Markov chains ask for the eigenvalues of the transition matrix, the stationary
distributions, and their mixing times.

To state the main result, we need some notation.  Let $W_J=\langle s_j \mid j \in J\rangle$ be the standard parabolic
subgroup associated to $J\subseteq I$. Let
\[
	D_R(w) = \{i\in I\mid \ell(ws_i)<\ell(w)\}
\]
be the set of right descents of $w\in W$. Let $w_J$ denote the longest element of $W_J$; note that $w_J$
is an involution and $w_I=w_0$.

\begin{theorem}\label{theorem.exchange} \cite{ASST:2014}
Let $(W,S)$ be a finite Coxeter system with $S=\{s_i \mid i\in I\}$, and let $P$ be a probability measure on $I$ with support $I$.  Let $T$
be the transition matrix of the exchange chain on $(W,S)$. Then the exchange chain is ergodic and the following hold.
\begin{enumerate}
\item The eigenvalues of $T$ are
\[
	\lambda_J = \sum_{j\in J}P(j),
\]
where $J\subseteq I$.
\item The multiplicity of $\lambda_J$ as an eigenvalue is given by
\[
	\sum_{K\supseteq J}(-1)^{|K|-|J|}\cdot |\Red(w_Jw_0)|.
\]
\item The stationary distribution $\pi$ is given as follows: if $\mathfrak w =i_1\cdots i_\ell \in \Red(w_0)$, then
\[
	\pi(\mathfrak w) = \prod_{j=1}^\ell \frac{P(i_j)}{1-\lambda_{D_R(s_{i_1} \cdots s_{i_{j-1}})}}.
\]
\end{enumerate}
\end{theorem}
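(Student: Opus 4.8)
The plan is to place this chain in the framework of random walks driven by $\mathscr{R}$-trivial monoids, which is precisely the machinery developed in~\cite{ASST:2014}. Let $M$ be the monoid generated by the exchange operators $e_i$ acting on $\Red(w_0)$, and write $T=\sum_{i\in I}P(i)\,e_i$ for the transition operator on $\mathbb{C}[\Red(w_0)]$. The first thing I would record is the idempotency $e_i^2=e_i$: prepending $i$ to a word already beginning with $i$ forces the exchange condition~\cite{BB:2005} to delete that leading $i$ and return the same word. The main structural step, and the one I expect to be the hardest, is to show that $M$ is $\mathscr{R}$-trivial, i.e.\ that Green's $\mathscr{R}$-preorder is a genuine partial order; the braid relations make the exchange moves interact nontrivially, so this is where the real work lies. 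Concretely I would track the \emph{support} of a product $e_{i_1}\cdots e_{i_m}$ (the set of generators that occur) and argue that a generator, once it has acted, cannot be undone, so supports can only grow along $\mathscr{R}$-chains. This both yields $\mathscr{R}$-triviality and organizes all spectral data by subsets $J\subseteq I$.

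Granting $\mathscr{R}$-triviality, part (1) is immediate from the general theory: the eigenvalues of $T$ are indexed by the support lattice, here the Boolean lattice of subsets $J\subseteq I$, and the eigenvalue attached to $J$ is the probability that the chosen generator lies in $J$, namely $\lambda_J=\sum_{j\in J}P(j)$. In the simultaneously triangularized basis, each $e_j$ acts on the block indexed by $L$ as the scalar $[j\in L]$, so that $T$ acts there by $\sum_{j\in L}P(j)=\lambda_L$.

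For part (2) I would fix $K\subseteq I$ and pass to the normalized sub-walk $T_K=\tfrac{1}{\lambda_K}\sum_{j\in K}P(j)\,e_j$ using only generators from $K$. By the diagonal action above, $T_K$ acts on the $L$-block by $\lambda_{K\cap L}/\lambda_K$, which equals $1$ exactly when $K\subseteq L$; hence the eigenvalue-$1$ space of $T_K$ is $\bigoplus_{L\supseteq K}E_{\lambda_L}$, whose dimension $\sum_{L\supseteq K}m_L$ equals the number of recurrent classes of the $K$-restricted chain. It then remains to count those recurrent classes, and here I would use the parabolic factorization $w_0=w_K\cdot(w_Kw_0)$ with $\ell(w_0)=\ell(w_K)+\ell(w_Kw_0)$. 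Running only $K$-moves drives every reduced word to a saturated form $\mathfrak{u}\,\mathfrak{v}$ with $\mathfrak{u}\in\Red(w_K)$ at the front and a tail $\mathfrak{v}\in\Red(w_Kw_0)$; once the front realizes the full long element $w_K$, every $j\in K$ is a left descent of $w_K$, so the exchange deletes inside the front and leaves $\mathfrak{v}$ fixed, while $\mathfrak{u}$ performs the ergodic exchange walk on $\Red(w_K)$. Thus the recurrent classes are indexed by the attainable tails; each $\mathfrak{v}\in\Red(w_Kw_0)$ occurs (start from $\mathfrak{u}_0\,\mathfrak{v}$, which is already saturated and reduced by length additivity), giving $\sum_{L\supseteq K}m_L=|\Red(w_Kw_0)|$. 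Möbius inversion on the Boolean lattice then produces the stated alternating sum for $m_J$; the checks $J=I$ (multiplicity $1$, matching ergodicity) and $J=\emptyset$ (total dimension $|\Red(w_0)|$, since $w_\emptyset=e$) both hold.

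Finally, for part (3) I would read the claimed $\pi$ as the telescoping product of conditional choices along the right weak order: writing $v_{j-1}=s_{i_1}\cdots s_{i_{j-1}}$, the word $\mathfrak{w}=i_1\cdots i_\ell$ is reduced exactly when each $i_j\notin D_R(v_{j-1})$, and the factor $P(i_j)/\bigl(1-\lambda_{D_R(v_{j-1})}\bigr)$ is precisely $P$ conditioned on choosing a valid (length-increasing) extension, since $1-\lambda_{D_R(v_{j-1})}=\sum_{k\notin D_R(v_{j-1})}P(k)$. Because maximal chains from $e$ to $w_0$ are exactly the elements of $\Red(w_0)$, this makes $\pi$ manifestly a probability measure, and the denominators encode the geometric contribution of the self-loops created whenever the prepended generator already lies in the current descent set. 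It then remains to verify $T\pi=\pi$, which I would do from the balance equations: the unique letter deleted by an exchange is controlled by the right descent set of the relevant prefix, and the factor $1-\lambda_{D_R(\cdot)}$ is exactly what cancels the self-loop weight at each cover, so the balance reduces to a local identity at each step of the chain. This matches the general stationary-distribution formula for $\mathscr{R}$-trivial walks in~\cite{ASST:2014}, specialized to $\Red(w_0)$.
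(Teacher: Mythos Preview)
The paper under review is expository and does not itself supply a proof of this theorem; it is stated with attribution to~\cite{ASST:2014}, and the text immediately following it passes to an open problem and a related Tsetlin-library discussion without any argument. There is thus no in-paper proof to compare your proposal against.

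That said, your outline is exactly the strategy of the cited source~\cite{ASST:2014}: realize the exchange operators $e_i$ as idempotents generating an $\mathscr{R}$-trivial monoid acting on $\Red(w_0)$, read off the eigenvalues $\lambda_J=\sum_{j\in J}P(j)$ from the Boolean support lattice, obtain the multiplicities by counting recurrent classes of the $K$-restricted walk via the parabolic factorization $w_0=w_K\cdot(w_Kw_0)$ and then M\"obius-inverting, and interpret the stationary distribution as the product of conditional step probabilities along the maximal chain in weak order. The step you correctly flag as the substantive one---establishing $\mathscr{R}$-triviality (equivalently, that the monoid is left-regular in the relevant sense)---is indeed where the content lies; your ``supports only grow'' heuristic is the right intuition but would need to be made precise. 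Your derivation for part~(2) in fact yields $\sum_{K\supseteq J}(-1)^{|K|-|J|}|\Red(w_Kw_0)|$, with $w_K$ rather than $w_J$ inside the sum; the $w_J$ in the displayed statement appears to be a typographical slip in the survey, and your version is the one consistent with M\"obius inversion of $\sum_{L\supseteq K}m_L=|\Red(w_Kw_0)|$.
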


Given the crystal isomorphism $B(\omega_0) \cong B(\rho)$ of Section~\ref{section.stanley and crystal}
for $w_0\in S_n$, it is natural to ask whether there is a combinatorial description of the Markov chain
in terms of standard staircase tableaux.

\begin{openproblem}
Can the exchange Markov chain for $S_n$ be naturally described on the set of standard staircase tableaux
with  $\binom{n}{2}$ boxes?
\end{openproblem} 

When $W=(\mathbb Z/2\mathbb Z)^n$ with $S$ the standard unit vectors,
then $w_0$ is the all-ones vector and the reduced decompositions of $w_0$ are all linear orderings of $S$
(written as words in the alphabet $I=\{1,2,\ldots,n\}$).  Then $e_i(\mathfrak w)$ moves the letter $i$ to the front of the word
$\mathfrak w$ (or linear ordering of $S$). The corresponding Markov chain is known as the \textit{Tsetlin library}.
It can be viewed as a shelf in the library with $n$ books labeled $1,2,\ldots,n$. The books can be arranged in
any order, that is permutations of $n$ letters (or as above linear orderings of $S$). A book can be chosen at random
and is then returned to the front of the bookshelf.

In~\cite{AKS:2014} a generalization of the Tsetlin library was defined using a generalized promotion operator on posets
of another one of Stanley's very interesting papers~\cite{Stanley.2009}. Start with a finite poset $P$ with a natural labeling, that
is, if $x<y$ in $P$, then the integer label $i$ at vertex $x$ and label $j$ at $y$ should also satisfy $i<j$.
One can define simple transposition operators $\tau_i$ on the linear extensions of $\mathcal L(P)$.
Namely, for $\pi \in \mathcal{L}(P)$ the operator $\tau_i$ interchanges $\pi_i$ and $\pi_{i+1}$ if $\pi_i$
and $\pi_{i+1}$ are not comparable in $P$. Otherwise it acts as the identity.
Then the generalized promotion operator is $\partial_i(\pi)= \tau_1 \tau_2 \cdots \tau_{i-1}(\pi)$.
 Set $\widehat{\partial}_i(\pi) = \partial_{\pi^{-1}_i}(\pi)$. Assigning a probability $P(i)$ to the promotion operator
 $\widehat{\partial}_i$ defines a Markov chain. As in Theorem~\ref{theorem.exchange}, the eigenvalues and
 their multiplicities can be computed explicitly for special posets (rooted forests) using the representation theory
 of $\mathscr{R}$-trivial monoids~\cite{AKS:2014,ASST:2014}.

%%%%%%%%%%%%%%%%%%%%%%%%%%%%%%%%%%%%%%%%%%%%%%%%%%%%%%

\end{document}